\numberwithin{equation}{section}
\newtheorem{theorem}{Theorem}[section]
\newtheorem{lemma}[theorem]{Lemma}
\newtheorem{proposition}[theorem]{Proposition}
\newtheorem{remark}[theorem]{Remark}
\begin{document}

\title{Ill-posedness of the hydrostatic Euler and singular Vlasov equations}

%(some!) kinetic models}

\author{Daniel Han-Kwan\footnotemark[1] \and Toan T. Nguyen\footnotemark[2]}

\date{\today}

\maketitle

\renewcommand{\thefootnote}{\fnsymbol{footnote}}

\footnotetext[1]{CMLS, \'Ecole polytechnique, CNRS, Universit\'e Paris-Saclay, 91128 Palaiseau Cedex, France. \\Email: daniel.han-kwan@polytechnique.edu}
\footnotetext[2]{Department of Mathematics, Pennsylvania State University, State College, PA 16802, USA. \\Email: nguyen@math.psu.edu}

\begin{center}
\emph{This paper is dedicated to Claude Bardos 
\\on the occasion of his 75th birthday, as a token of friendship and admiration.}
\end{center}
\bigskip

\begin{abstract}

 In this paper, we develop an abstract framework to establish ill-posedness in the sense of Hadamard for some nonlocal PDEs displaying unbounded unstable spectra. We apply it to prove the ill-posedness for the hydrostatic Euler equations as well as for the kinetic incompressible Euler equations and the Vlasov-Dirac-Benney system.

\end{abstract}

%\tableofcontents 

\section{Introduction}

 In this paper, we develop an abstract framework to establish ill-posedness in the sense of Hadamard for some nonlocal PDEs displaying unbounded unstable spectra;
 this phenomenon is reminiscent of Lax-Mizohata ill-posedness for first-order systems violating the hyperbolicity condition (that is, when the spectrum of the operator's principal symbol is not included in the real line).

 \bigskip
 
By (local-in-time) well-posedness of the Cauchy problem for a PDE, we mean

\begin{itemize}

\item given initial data, there exists a time $T>0$ so that a solution exists for all times $t \in [0,T]$; 

\item the solution is unique; 

\item the solution map is (H\"older) continuous with respect to initial data. 

\end{itemize}
This notion of well-posedness for PDEs was introduced by Hadamard \cite{Ha}. %, in which his third condition for well-posedness requires only continuous dependence of the solution with respect to initial data.  
The Lax-Mizahota ill-posedness result is concerned with this definition. \emph{In this paper, we  shall describe situations in which the third well-posedness condition breaks down (for data in Sobolev spaces).}  

\bigskip

In the context of systems of first-order partial differential equations, Hadamard's well-posedness was extensively studied by many authors, including Friedrichs, G\r{a}rding, H\"ormander, Lax, among others; see, for instance, \cite{Ha, Lax, Mi, Wa, Lebeau, BCT, M, LNT} and the references therein. For linear equations, it was Lax \cite{Lax} and Mizohata \cite{Mi} who first showed that hyperbolicity is a necessary condition for well-posedness of the Cauchy problem for $C^\infty$ initial data. The result was later extended to quasilinear systems by Wakabayashi \cite{Wa}, and recently by M\'etivier \cite{M}. The violation of hyperbolicity creates unbounded unstable spectrum of the underlying principal differential operators. M\'etivier \cite{M} showed that for first order systems that are not hyperbolic, the solution map is not $\alpha$-H\"older continuous (for all $\alpha \in (0,1]$) from any Sobolev space to $L^2$, or more precisely, it does not belong to $C^\alpha({H}^s, L^2)$, for all $s \geq 0$ and $\alpha \in (0,1]$, within arbitrarily short time; that is, the Cauchy problem is ill-posed, violating the above third condition for well-posedness.    

In this paper, we prove an analogue of M\'etivier's result for some nonlocal PDEs: namely, the hydrostatic Euler equations as well as for some singular Vlasov equations: the kinetic incompressible Euler equations and the Vlasov-Dirac-Benney system. The purpose of this introduction is to briefly discuss these equations and to present the main results. Our abstract framework for proving the ill-posedness is inspired by the analysis of M\'etivier \cite{M} and Desjardins-Grenier \cite{DG}.

%\cite{BCT}

\subsection{Hydrostatic Euler equations}

The Hydrostatic Euler equations arise in the context of two-dimensional incompressible ideal flows in a narrow channel (see e.g. \cite{Lions}). They read:
\begin{equation}\label{hydro}
\left \{ 
\begin{aligned}
\partial_t  u +  u \partial_x  u +  v \partial_z  u +  p_x &=0,
\\
\partial_x  u + \partial_z  v &=0,
\end{aligned}
\right.
\end{equation}
for $(x,z) \in \mathbb{T} \times [-1,1]$, where $\mathbb{T} := \mathbb{R}/\mathbb{Z}$. The torus $\mathbb{T}$ is equipped with the normalized Lebesgue measure, so that $\mathrm{Leb} ( \mathbb{T} )= 1$. Here $( u(t,x,z), v(t,x,z))\in \mathbb{R}^2$, and $ p(t,x)$ are the unknowns in the equation. We impose the zero boundary conditions: 
$$  v_{\vert_{z = \pm 1}} = 0.$$
The vorticity $ \omega := \partial_z  u $ satisfies the following equation
\begin{equation}\label{vorti}
\partial_t  \omega +  u \partial_x  \omega +  v \partial_z  \omega = 0,
\end{equation}
in which $ u := \partial_z  \varphi$, $ v := -\partial_x  \varphi$, and the stream function $ \varphi$ solves the elliptic problem: 
\begin{equation}\label{def-stream-in}
\partial_z^2  \varphi =  \omega , \qquad  \varphi_{\vert_{z=\pm 1}} = 0.
\end{equation}
Thus, one can observe a loss of one $x$-derivative in the equation \eqref{vorti} through $ v =- \partial_x  \varphi$, as compared to $ \omega$. This indicates that a standard Cauchy theory cannot be expected for this equation.

\bigskip

Brenier was the first to develop a Cauchy theory in Sobolev spaces for data with convex profiles \cite{Br99}; this was revisited  and extended recently in Masmoudi-Wong \cite{MW} and Kukavica-Masmoudi-Vicol-Wong \cite{KMVW}. In \cite{KTVZ}, Kukavica-Temam-Vicol-Ziane also provide an existence result for data with analytic regularity. 

The derivation of Hydrostatic Euler from the incompressible Euler equations set in a narrow channel, for data with convex profiles, was first performed by Grenier \cite{Gr99-2}, then by Brenier \cite{Br03} with different methods (see also \cite{MW}). One key idea in these works is the use of the convexity  to build a suitable energy which is not degenerate in the hydrostatic limit.

In \cite{Renardy}, Renardy showed that for arbitrary odd shear flows $U(z)$ so that $\frac{1}{U(z)^2}$ is integrable, the linearized hydrostatic Euler equations \eqref{vorti}-\eqref{def-stream-in} around $U'$ have unbounded unstable spectrum. Such profiles do not satisfy the convexity condition. Following an argument of \cite{GT}, this property for the spectrum can be used to straightforwardly prove some ill-posedness for the nonlinear equations (see also \cite{GVD, GVN, GN} for the ill-posedness of the Prandtl equations or \cite{FV} for the SQG equations); loosely speaking, it asserts that the flow of solutions, if it exists, cannot be $C^1(H^s, H^1)$, for all $s \geq 0$, within a fixed positive time. In this work, we shall construct a family of solutions to show that the solution map from $H^s$ to $L^2$ has unbounded H\"older norm, within arbitrarily short time. In the proof, we shall take an unstable shear flow that is analytic. Such a shear flow exists; for instance, $U(z) = \mathrm{tanh}(\frac z{d_1})$ for small $d_1$ yields unstable spectrum as shown by \cite{CM}.

%What we shall prove in this paper is that the flow cannot be $C^\alpha({H}^s_{\mathrm{weight}}, L^2)$, for all $s \geq 0$, $\alpha \in (0,1]$, and any polynomial weight in $v$.
%

\bigskip

We prove the following ill-posedness result: 
 
\begin{theorem}[Ill-posedness for the hydrostatic Euler equations]
\label{t-hydro}
There exists a stationary shear flow $U(z)$ such that the following holds.
For all $s \in \mathbb{N}$, $\alpha \in (0,1]$, and $k \in \mathbb{N}$, there are families of solutions $(\omega_\varepsilon)_{\varepsilon>0}$ of \eqref{vorti}-\eqref{def-stream-in}, times $ t_\varepsilon = \mathcal{O}(\varepsilon |\log \varepsilon|)$, and $(x_0, z_0) \in \mathbb{T} \times (-1,1)$ such that
\begin{equation}
\lim_{\varepsilon \to 0} \frac{ \| \omega_\varepsilon - U' \|_{L^2([0,t_\varepsilon] \times \Omega_\varepsilon)}}{\| \omega_\varepsilon|_{t=0} -U' \|^\alpha_{H^s(\mathbb{T} \times (-1,1))}}=+\infty
\end{equation}
with $\Omega_\varepsilon =B(x_0, \varepsilon^k) \times B(z_0, \varepsilon^k)$.%, $x_0 \in \mathbb{T}^3$, and $v_0 \in \mathbb{R}^3$. 

\end{theorem}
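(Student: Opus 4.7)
The plan is to build a family of approximate solutions concentrated at a large horizontal wavenumber, to use an unstable eigenmode of the linearization to drive rapid growth, and then to close a nonlinear stability argument in an analytic (or Gevrey) framework. The first observation I would exploit is that the linearized vorticity equation around a stationary shear flow $U(z)$ admits, for every horizontal wavenumber $k\in\mathbb{N}$, a normal-mode $e^{ik(x-c_0 t)}\omega_0(z)$ provided $(c_0,\varphi_0)$ solves the $k$-independent Rayleigh-type problem $(U-c_0)\varphi_0''=U''\varphi_0$ on $(-1,1)$ with $\varphi_0(\pm 1)=0$ and $\omega_0=\varphi_0''$. Because the spectral problem does not depend on $k$, any single unstable pair $(c_0,\varphi_0)$ with $\mathrm{Im}\,c_0=\gamma_0>0$ gives an entire family of linearized solutions with growth rates $k\gamma_0\to\infty$; the existence of such a pair for $U(z)=\tanh(z/d_1)$ with $d_1$ small is guaranteed by \cite{CM} (or \cite{Renardy}), and since $c_0\notin U([-1,1])$, $\varphi_0$ and $\omega_0$ are analytic on $[-1,1]$.

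Step one: fix the shear flow $U(z)$ above and the eigenpair $(c_0,\varphi_0)$, and for each small $\varepsilon>0$ set $k_\varepsilon=\lfloor 1/\varepsilon\rfloor$ and $\delta_\varepsilon=\varepsilon^M$ with $M$ a large parameter to be chosen at the end. Define the approximate solution
\begin{equation*}
\omega^{\mathrm{app}}_\varepsilon(t,x,z):=U'(z)+\delta_\varepsilon\,\mathrm{Re}\bigl(e^{ik_\varepsilon(x-c_0 t)}\omega_0(z)\bigr),
\end{equation*}
which exactly solves the linearized problem and leaves a quadratic residue $R^{\mathrm{app}}_\varepsilon$ whose amplitude is bounded by $C\delta_\varepsilon^2 k_\varepsilon e^{2k_\varepsilon\gamma_0 t}$ in analytic norm (if necessary I would add a few higher-order correctors to sharpen this). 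Step two: set $t_\varepsilon:=\beta\varepsilon|\log\varepsilon|$ with $\beta>0$ tuned so that $\delta_\varepsilon e^{k_\varepsilon\gamma_0 t_\varepsilon}=\varepsilon^{M-\beta\gamma_0}$ is still a small power of $\varepsilon$; the factor $\varepsilon|\log\varepsilon|$ is dictated by the balance between the exponential rate $k_\varepsilon\gamma_0\sim\gamma_0/\varepsilon$ and the desired amplification by a negative power of $\varepsilon$.

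Step three: invoke an analytic Cauchy theory for \eqref{hydro}--\eqref{def-stream-in} (Kukavica-Temam-Vicol-Ziane \cite{KTVZ}) to produce, for each $\varepsilon$, an exact solution $\omega_\varepsilon$ with initial datum $\omega^{\mathrm{app}}_\varepsilon(0)$ defined at least on $[0,T_0]$ with $T_0>0$ independent of $\varepsilon$, so that $t_\varepsilon<T_0$ for small $\varepsilon$. Then estimate the error $\rho_\varepsilon:=\omega_\varepsilon-\omega^{\mathrm{app}}_\varepsilon$ in an analytic norm that radius-shrinks in time (à la Cauchy--Kowalevski), absorbing the derivative loss hidden in $v=-\partial_x\varphi$; Grönwall together with the bound on $R^{\mathrm{app}}_\varepsilon$ and the linearized growth rate $k_\varepsilon\gamma_0$ yields $\|\rho_\varepsilon(t)\|_{L^2}\lesssim \delta_\varepsilon^2 e^{2k_\varepsilon\gamma_0 t}$ on $[0,t_\varepsilon]$, which is strictly smaller than the principal linear term $\delta_\varepsilon e^{k_\varepsilon\gamma_0 t}$ as long as $\delta_\varepsilon e^{k_\varepsilon\gamma_0 t_\varepsilon}\ll 1$, i.e. $M>\beta\gamma_0$. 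This is the main obstacle and the step where the specific abstract framework of the paper presumably enters: handling the nonlocal operator $\omega\mapsto v$ (which loses one $x$-derivative) and propagating analyticity on the time scale $t_\varepsilon$ without crushing the radius.

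Step four is arithmetic. Pick $(x_0,z_0)$ such that $\mathrm{Re}(e^{ik_\varepsilon(x_0-\mathrm{Re}(c_0)t_\varepsilon)}\omega_0(z_0))\geq c_0'>0$ uniformly (one may shift $x_0=x_0(\varepsilon)$ to fix the phase); then on $\Omega_\varepsilon=B(x_0,\varepsilon^k)\times B(z_0,\varepsilon^k)$, which contains a fixed fraction of a period of the oscillation once $k\geq 2$ is treated via a box-averaging at the carrier scale, one obtains the lower bound
\begin{equation*}
\|\omega_\varepsilon-U'\|_{L^2([0,t_\varepsilon]\times\Omega_\varepsilon)}\gtrsim \delta_\varepsilon e^{k_\varepsilon\gamma_0 t_\varepsilon}\sqrt{t_\varepsilon}\,\varepsilon^{k},
\end{equation*}
while the initial $H^s$ norm obeys $\|\omega_\varepsilon(0)-U'\|_{H^s}\lesssim \delta_\varepsilon k_\varepsilon^s\sim \varepsilon^{M-s}$. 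Taking the ratio gives a power $\varepsilon^{M-\beta\gamma_0+k+1/2-\alpha(M-s)}$ up to logarithms, and it suffices to choose first $\beta\gamma_0$ very large (possible since $\gamma_0$ is fixed but the rate $k_\varepsilon\gamma_0$ is unbounded, so we can iterate the scheme on integer multiples of $c_0$ or just let $\beta\to\infty$) and then $M>\beta\gamma_0$ to make the exponent strictly negative. This forces the ratio to blow up as $\varepsilon\to 0$, which is exactly the stated failure of $\alpha$-Hölder continuity of the flow from $H^s$ to $L^2$.
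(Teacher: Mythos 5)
Your overall strategy is the same as the paper's (high-wavenumber unstable Rayleigh mode with $k_\varepsilon\sim 1/\varepsilon$, amplitude $\varepsilon^M$, time $t_\varepsilon\sim\varepsilon|\log\varepsilon|$, nonlinear closure in radius-shrinking analytic norms, then the same ratio arithmetic), but Step three — which you yourself flag as "the main obstacle" — is precisely the content of the paper's proof, and as written it does not go through. First, the appeal to \cite{KTVZ} for an existence time $T_0$ independent of $\varepsilon$ is not available: your datum $U'+\varepsilon^M\Re(e^{ik_\varepsilon x}\omega_0)$ has analytic norm of size $\varepsilon^M e^{\tau k_\varepsilon}\sim\varepsilon^M e^{\tau/\varepsilon}$ at any fixed radius $\tau$, so the admissible radius must shrink like $\varepsilon|\log\varepsilon|$ and any black-box Cauchy--Kowalevsky existence time shrinks with it; this is not fatal (you only need existence up to $t_\varepsilon$), but it removes the uniform $T_0$ your argument invokes. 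Second, and more seriously, the error bound $\|\rho_\varepsilon(t)\|\lesssim\delta_\varepsilon^2 e^{2k_\varepsilon\gamma_0 t}$ on all of $[0,t_\varepsilon]$ is asserted but not derived, and a generic Gr\"onwall/CK estimate cannot give it: with radius $\sim M\varepsilon|\log\varepsilon|$ in $x$, the radius is exhausted after a time of order (radius)/(loss rate), so to survive until $t_\varepsilon$ while the mode amplifies by $\varepsilon^{-\beta\gamma_0}$ you need the loss-of-analyticity rate per unit wavenumber to be arbitrarily close to the maximal unstable rate $\gamma_0$ of \eqref{def-g0}. That sharp rate is exactly what the paper proves in Proposition~\ref{prop-semigroup-cL-e} (via Laplace inversion and the solvability of the inhomogeneous Rayleigh problem for $|\Im c|>\gamma_0$, i.e.\ the Evans function \eqref{evans}), and it is assumption {\bf (H.2)} of the abstract framework; without it your closure condition "$M>\beta\gamma_0$" has no basis, because the Gr\"onwall constant would not be $k_\varepsilon\gamma_0$ but something dictated by crude operator norms.

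Two further points that your sketch glosses over and that the paper's construction is designed to handle. The nonlinearity $-\partial_z\varphi\,\partial_y\omega+\partial_y\varphi\,\partial_z\omega$ does not satisfy a CK-type bilinear estimate in which only one factor is differentiated: $\partial_y\varphi$ costs a $y$-derivative of $\omega$ while $\partial_z\omega$ costs a $z$-derivative, so both factors lose; the paper fixes this by passing to the augmented unknown $W=(\omega,\partial_y\omega,\partial_z\omega)$ and the system \eqref{pert-vortM}, for which {\bf (H.4)} holds, and then builds $w=u-u_{\mathrm{app}}$ by an iteration in Caflisch time-weighted norms (Lemma~\ref{l-ill-posedness}) rather than by comparing with an independently constructed exact solution. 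Finally, in Step four the complex phase $e^{ik_\varepsilon(x-c_0t)}$ oscillates across $[0,t_\varepsilon]\times B(x_0,\varepsilon^k)$, so a pointwise lower bound at a single $(x_0,z_0)$ does not immediately give the stated $L^2$ lower bound; the paper restricts to a window $[t_\varepsilon-\varepsilon,t_\varepsilon]$ and, for non-real $\lambda_0$, compares with $\Re(e^{\lambda_0 s}g)$ while avoiding the discrete cancellation times — your "box-averaging" remark points in the right direction but would need to be carried out. In short: the skeleton matches the paper, but the decisive ingredients (sharp semigroup bound at rate $\gamma_0$, the augmented system making the nonlinearity CK-admissible, and the iteration closing the estimate up to $t_\varepsilon$) are missing, so the proposal has a genuine gap at its central step.
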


We remark that the instability is strong enough so that it occurs within a vanishing spatial domain $\Omega_\varepsilon$ and a vanishing time $t_\varepsilon$, as $\varepsilon \to0$. As will be seen in the proof, $(x_0, z_0)$ can actually be taken arbitrarily in $\mathbb{T} \times (-1,1)$. 

\subsection{Kinetic incompressible Euler and Vlasov-Dirac-Benney equations}

%In this paper, we prove ill-posedness in the sense of Hadamard of 
The so-called {kinetic incompressible Euler} and Vlasov-Dirac-Benney systems are kinetic models from plasma physics, arising in the context of \emph{small Debye lengths regimes}. Although our results will be stated in the three-dimensional framework, they can be adapted to any dimension.

Consider first the kinetic incompressible Euler equations, which read
\begin{equation}\label{Vlasov}
\partial_t f + v\cdot \nabla_x f  - \nabla_x \varphi \cdot \nabla_v f = 0, 
\end{equation}
\begin{equation}
\label{Euler}
\rho(t,x) := \int_{\mathbb{R}^3} f(t,x,v) \, dv = 1, 
\end{equation}
for $(t,x,v) \in \mathbb{R}^+ \times \mathbb{T}^3 \times \mathbb{R}^3$, in which $f(t,x,v)$ is the distribution function at time $t\ge 0$, position $x \in \mathbb{T}^3 := \mathbb{R}^3 /\mathbb{Z}^3$, and velocity $v \in \mathbb{R}^3$ of electrons in a plasma. The torus $\mathbb{T}^3$ is equipped with the normalized Lebesgue measure, so that $\mathrm{Leb} ( \mathbb{T}^3 )= 1$. The potential $\varphi$ stands for a Lagrange multiplier (or, from the physical point of view, a {pressure}) related to the constraint $\rho=1$. 
It is possible to obtain an explicit formula for the potential $\varphi$, arguing as follows. 
Introduce the current density  $j (t,x) := \int_{\mathbb{R}^3} f(t,x,v) v \, dv$.
We start by writing the local conservation of charge and current from the Vlasov equation:
$$
\begin{aligned}\partial_t \rho + \nabla \cdot j&= 0,
\\
\partial_t j + \nabla \cdot \int f v \otimes v \, dv &= -\nabla \varphi.
\end{aligned}$$
By using the constraint \eqref{Euler}, it follows that $\nabla \cdot j=0$. Plugging this into the conservation of current, one gets the law
 \begin{equation}
\label{incomp}
-\Delta \varphi =  \nabla \cdot \left(\nabla \cdot \int f v \otimes v \, dv  \right).
\end{equation}
Looking for solutions to \eqref{Vlasov}-\eqref{Euler} of the form $f(t,x,v) = \rho(t,x) \delta_{v=u(t,x)}$
turns out to be equivalent to finding solutions $(\rho,u)$ of the classical incompressible Euler equations. This therefore justifies the name we have chosen for \eqref{Vlasov}-\eqref{Euler}, as suggested by Brenier \cite{Br89}.

\bigskip

The {Vlasov-Dirac-Benney system is closely related. It reads
  \begin{equation}
\label{comp1}
\partial_t f + v\cdot \nabla_x f  - \nabla_x \varphi \cdot \nabla_v f = 0, 
\end{equation}
  \begin{equation}
\label{comp2}
 \varphi =  \int_{\mathbb{R}^3} f(t,x,v) \, dv  -1.
\end{equation}
This model appears to be a kinetic analogue of the compressible isentropic Euler equations with parameter $\gamma=2$. %\emph{via} monokinetic distributions $f(t,x,v) = \rho(t,x) \delta_{v=u(t,x)}$. 
The name Vlasov-Dirac-Benney was coined by Bardos in \cite{Bardos}, due to connections with the Benney model for Water Waves.

\bigskip

Both kinetic incompressible Euler and Vlasov-Dirac-Benney equations can be formally derived in the \emph{quasineutral limit} of the Vlasov-Poisson system, i.e. in the small Debye length regime. This corresponds to the singular limit $\varepsilon \to 0$ in the following scaled equations:
$$
\partial_t f_\varepsilon + v\cdot \nabla_x f_\varepsilon  - \nabla_x \varphi_\varepsilon \cdot \nabla_v f_\varepsilon = 0, 
$$
where $\varphi_\varepsilon$ solves a Poisson equation given,
\begin{enumerate}
\item {\bf for the case of electron dynamics}, by
$$
-\varepsilon^2 \Delta_x \varphi_\varepsilon = \rho_\varepsilon -1, \qquad \rho_\varepsilon := \int_{\mathbb{R}^3} f_\varepsilon \, dv,
$$
which yields the kinetic incompressible Euler equations in the formal limit $\varepsilon \to 0$ (see \cite{Br89});
\item {\bf for the case of ion dynamics}, by
$$
-\varepsilon^2 \Delta_x \varphi_\varepsilon = \rho_\varepsilon - \varphi_\varepsilon - 1, \qquad \rho_\varepsilon := \int_{\mathbb{R}^3} f_\varepsilon \, dv,
$$
which yields the Vlasov-Dirac-Benney system in the formal limit $\varepsilon \to 0$ (see \cite{HK11}).
\end{enumerate}

\bigskip

Directly from the laws \eqref{incomp} and \eqref{comp2} for the potential $\varphi$, one sees that there is a loss of one $x$-derivative through the force $-\nabla_x \varphi$, as compared to the distribution function $f$. This explains why a standard Cauchy theory cannot be expected for these equations. What is known though is the existence of \emph{analytic solutions} (see \cite{HK11}, Jabin-Nouri \cite{JN}, Bossy-Fontbona-Jabin-Jabir \cite{BFJJ}), as well as an $H^s$ theory for \emph{stable} data (see  Bardos-Besse \cite{BB} and the recent work of the first author and Rousset \cite{HKR}).

As for the rigorous justification of the quasineutral limit, we first refer to the work of Grenier \cite{Gr95} in the case of data with analytic regularity in $x$ (see also \cite{HI,HI2} where it is shown that exponentially small but rough perturbations of the data considered by Grenier are admissible). In  \cite{B}, Brenier introduced the so-called modulated energy method and derived the incompressible Euler equations in the limiting case of monokinetic distributions (see \cite{HK11} for what concerns the case of the compressible isentropic Euler system). In the work \cite{HKH}, the first author and Hauray showed that the formal limit (to \eqref{Vlasov}-\eqref{Euler} or \eqref{comp1}-\eqref{comp2}) is in general not true in Sobolev spaces, because of instabilities of the Vlasov-Poisson system (see also \cite{HKN1}).
The rigorous derivation of \eqref{comp1}-\eqref{comp2} for initial data with a Penrose stability condition was completed only recently by the first author and Rousset \cite{HKR}.

In \cite{BN}, Bardos and Nouri show that around \emph{unstable} homogeneous equilibria, the linearized equations of \eqref{comp1}-\eqref{comp2} have unbounded unstable spectrum. This property was used to prove some ill-posedness, using the above-mentioned argument of \cite{GT}, see \cite[Theorem 4.1]{BN}; loosely speaking they show that the flow of solutions, if it exists, cannot be $C^1(H^s, H^1)$, for all $s \geq 0$. What we shall prove in this paper is that the flow cannot be $C^\alpha({H}^s_{\mathrm{weight}}, L^2)$, for all $s \geq 0$, $\alpha \in (0,1]$, and any polynomial weight in $v$. In the proof we shall take unstable homogeneous equilibria that are analytic and decaying sufficiently fast at infinity: typical examples are double-bump equilibria satisfying these constraints.

\bigskip
More precisely, we prove the following ill-posedness result: 

\begin{theorem}[Ill-posedness for the kinetic incompressible Euler and Vlasov-Dirac-Benney equations]
\label{t-ill}
There exists a stationary solution $\mu(v)$ such that the following holds.
For all $m,s \in \mathbb{N}$, $\alpha \in (0,1]$, and $k \in \mathbb{N}$, there are families of solutions $(f_\varepsilon)_{\varepsilon>0}$ of \eqref{Vlasov}-\eqref{Euler} (respectively, of the system \eqref{comp1}-\eqref{comp2}), times $ t_\varepsilon = \mathcal{O}(\varepsilon |\log \varepsilon|)$, and  $(x_0, v_0) \in \mathbb{T}^3 \times \mathbb{R}^3$, such that
\begin{equation}
\lim_{\varepsilon \to 0} \frac{ \| f_\varepsilon -\mu \|_{L^2([0,t_\varepsilon] \times \Omega_\varepsilon)}}{\| \langle v\rangle^m (f_\varepsilon|_{t=0} - \mu) \|^\alpha_{H^s(\mathbb{T}^3 \times \mathbb{R}^3)}}=+\infty
\end{equation}
with $\Omega_\varepsilon =B(x_0, \varepsilon^k) \times B(v_0, \varepsilon^k)$. Here, $\langle v\rangle := \sqrt{1+|v|^2}$. 
%, $x_0 \in \mathbb{T}^3$, and $v_0 \in \mathbb{R}^3$. 

\end{theorem}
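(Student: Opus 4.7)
The plan is to exploit the unbounded unstable spectrum of the linearization of \eqref{comp1}-\eqref{comp2} (resp.\ \eqref{Vlasov}-\eqref{Euler}) around a carefully chosen homogeneous equilibrium $\mu(v)$, following the M\'etivier--Desjardins-Grenier scheme: build an approximate solution based on a high-frequency unstable eigenmode, show the exact solution stays close to it, then estimate the ratio. I would take $\mu$ to be an analytic, super-polynomially decaying double-bump profile, so that the Penrose analysis of Bardos-Nouri \cite{BN} gives, for every sufficiently small $\varepsilon>0$, a wavenumber $k_\varepsilon\in\mathbb{Z}^3$ with $|k_\varepsilon|\sim 1/\varepsilon$ and an unstable eigenvalue $\lambda_\varepsilon$ with $\mathrm{Re}\,\lambda_\varepsilon\ge c_0/\varepsilon$, together with a smooth, rapidly decaying $v$-eigenfunction $\phi_\varepsilon(v)$ bounded away from zero on a fixed ball in $v$.

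Next, I would construct an approximate solution via a WKB-type expansion
$$f^{\mathrm{app}}_\varepsilon(t,x,v) \;=\; \mu(v) \;+\; \sum_{j=1}^{M} \delta_\varepsilon^{j}\, f_j(t,x,v),$$
where $f_1(t,x,v)=\mathrm{Re}\,\bigl(e^{\lambda_\varepsilon t}e^{i k_\varepsilon\cdot x}\phi_\varepsilon(v)\bigr)$ is the linear eigenmode, and $f_2,\dots,f_M$ are generated recursively as solutions of the linearized equation driven by the Fourier-localized sources produced by the nonlinear self-interactions of the previous $f_j$'s. The amplitude $\delta_\varepsilon=\varepsilon^\beta$, the order $M$, and the lifespan $t_\varepsilon=\tau\varepsilon|\log\varepsilon|$ are free parameters to be fixed at the end; the expansion remains a genuine asymptotic series as long as $\delta_\varepsilon e^{\mathrm{Re}\,\lambda_\varepsilon t}\ll 1$.

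The exact solution $f_\varepsilon$ starting from $f^{\mathrm{app}}_\varepsilon|_{t=0}$ is then constructed via a Cauchy-Kowalevski argument in analytic spaces in the spirit of Grenier \cite{Gr95} and Jabin-Nouri \cite{JN}; stability against $f^{\mathrm{app}}_\varepsilon$ is obtained by an energy estimate on $r_\varepsilon:=f_\varepsilon-f^{\mathrm{app}}_\varepsilon$ in a scale of analytic norms with a time-decreasing radius, which absorbs the loss of one $x$-derivative in $\nabla_x\varphi_\varepsilon$. This yields $\|r_\varepsilon(t)\|\lesssim \delta_\varepsilon^{M+1}e^{(M+1)\mathrm{Re}\,\lambda_\varepsilon t}$ on $[0,t_\varepsilon]$. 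Since $f_1$ oscillates at wavelength $\varepsilon$, an explicit computation bounds its $L^2$-mass on $B(x_0,\varepsilon^k)\times B(v_0,\varepsilon^k)$ from below by a definite fraction of the period average; combined with $\|\langle v\rangle^m(f^{\mathrm{app}}_\varepsilon|_{t=0}-\mu)\|_{H^s}\lesssim \delta_\varepsilon\varepsilon^{-s}$, the choice of $\tau$ large compared to $(1-\alpha)\beta+\alpha s+3k+\tfrac12$, and $M$ large enough for the remainder to be negligible against $f_1$, makes the ratio in the statement diverge as a negative power of $\varepsilon$.

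The main obstacle I anticipate is closing the nonlinear stability estimate uniformly on the time $t_\varepsilon\sim\varepsilon|\log\varepsilon|$, which is long in the natural units of the instability: one has to juggle simultaneously the polynomial $v$-weights (needed to control the denominator), the analyticity radius in $x$ (needed for the local well-posedness of the exact flow), and the exponential amplification of the linear semigroup at the high frequency $|k_\varepsilon|\sim 1/\varepsilon$. It is precisely here that the abstract framework announced in the introduction should replace the ad hoc bookkeeping by a clean fixed-point argument applicable uniformly to the hydrostatic Euler, kinetic incompressible Euler and Vlasov-Dirac-Benney systems.
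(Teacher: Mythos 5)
Your overall strategy belongs to the same family as the paper's: pick an analytic double-bump equilibrium whose Penrose analysis gives an unbounded unstable spectrum with growth rate linear in the wavenumber, seed the solution with a mode at frequency $|k_\varepsilon|\sim 1/\varepsilon$, control the exact solution by a Cauchy--Kowalevski argument in analytic norms with shrinking radius, and conclude by a ratio estimate on a small ball. The paper implements this more economically than your WKB hierarchy: it takes only the linear mode $\varepsilon^{M}e^{\lambda_0 s}g$ as approximate solution (in the fast variables $s=t/\varepsilon$, $y=x/\varepsilon$), rewrites the system for the augmented unknown $(f,\nabla_y f,\nabla_v f)$ so that the nonlinearity loses only one derivative, and solves for the full remainder $w$ with $w|_{s=0}=0$ by a fixed point in Caflisch-type time-weighted analytic norms (Lemma~\ref{l-ill-posedness}), so no correctors $f_2,\dots,f_M$ are needed.

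There is, however, a genuine gap in your quantitative bookkeeping: you treat the horizon $t_\varepsilon=\tau\varepsilon|\log\varepsilon|$ as if $\tau$ could be chosen large compared to $(1-\alpha)\beta+\alpha s+3k+\tfrac12$, independently of the other parameters. It cannot. Any analytic stability estimate that absorbs the loss of one $x$-derivative must shrink the analyticity radius at a rate at least $\gamma_0:=\sup_{\hat n}\Re\lambda_{\hat n}$ per unit wavenumber, since the linear semigroup amplifies frequency $n$ like $e^{\gamma_0|n|t}$; and the initial radius $\rho_0$ is capped by the requirement that your data, of amplitude $\delta_\varepsilon=\varepsilon^{\beta}$ at frequency $|k_\varepsilon|\sim 1/\varepsilon$, have small analytic norm, i.e. $\rho_0|k_\varepsilon|\lesssim\beta|\log\varepsilon|$. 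Consequently
$$\Re\lambda_\varepsilon\, t_\varepsilon\;\le\;\frac{\Re\lambda_\varepsilon}{\gamma_0|k_\varepsilon|}\,\rho_0|k_\varepsilon|\;\lesssim\;\beta|\log\varepsilon|,$$
so the total amplification is at most $\varepsilon^{-\beta}$ and $\tau$ is not a free parameter. The amplitude you can recover is roughly $\varepsilon^{\beta(1-\rho)}$ with $\rho\le 1$ measuring how close the chosen mode's rate per unit wavenumber is to $\gamma_0$ (and how tight the radius budget is), and the ratio in the theorem diverges only if $\beta\bigl(\alpha-(1-\rho)\bigr)$ exceeds $\alpha s+3k+\tfrac12$ plus logarithmic losses; for small $\alpha$ this forces $\rho$ arbitrarily close to $1$. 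This is precisely why the paper must (i) select the unstable direction so that its rate is within $\eta$ of the supremum $\gamma_0$, see \eqref{1} in {\bf (H.2)}; (ii) prove the sharp semigroup bound allowing radius loss at any rate $\Lambda>\gamma_0$, Proposition~\ref{prop-semigroup-cL}, via the Laplace-transform resolvent analysis; and (iii) tune its parameters ($\beta$ small, amplitude exponent $M$ large in the paper's notation, \eqref{def-alpha1}--\eqref{e-smallbeta}) so that the surviving size $\varepsilon^{\frac{2\beta}{1+\beta}M}$ beats $\varepsilon^{\alpha(M-s)}$. Without these sharpness ingredients your scheme yields the divergence only for $\alpha$ close to $1$, not for all $\alpha\in(0,1]$. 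Two smaller points: your remainder bound $\|r_\varepsilon\|\lesssim\delta_\varepsilon^{M+1}e^{(M+1)\Re\lambda_\varepsilon t}$ presumes, rather than proves, that the iteration closes within the same shrinking-radius budget; and on a ball of radius $\varepsilon^{k}$, possibly much smaller than the wavelength $\varepsilon$, the ``fraction of the period average'' lower bound is not correct as stated --- one must center the ball where the eigenfunction does not vanish and, when $\lambda_\varepsilon$ is not real, avoid the discrete times at which the oscillating factor cancels, as the paper does at the end of the proof of Theorem~\ref{thm-abstract}.
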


In the proof, we shall focus only on the system \eqref{Vlasov}-\eqref{incomp}, since the analysis is almost identical for what concerns the Vlasov-Dirac-Benney equations. Furthermore, this result also holds for \eqref{Vlasov}-\eqref{Euler} and  \eqref{Vlasov}-\eqref{incomp} in any dimension $d \in \mathbb{N}^*$.

\bigskip

The abstract ill-posedness framework will be presented in Section \ref{s-abstract}.
The ill-posedness of the hydrostatic Euler and kinetic incompressible Euler equations is then proved in Section \ref{s-Euler} and Section \ref{s-ill}, respectively.

\section{An abstract framework for ill-posedness }\label{s-abstract}

In this section, we present a framework to study the ill-posedness of the following abstract PDE for $U = U(t,x,z)$:
\begin{equation}
\label{e-abstract}
\partial_t U - \mathscr{L} U = \mathcal{Q}(U,U), \quad t\geq 0, \, \quad x \in \mathbb{T}^d := \mathbb{R}^d / \mathbb{Z}^d, \, \quad z \in \Omega, %\mathbb{R}^d,
\end{equation}
in which $\mathscr{L}$ (resp. $\mathcal{Q}$) is a linear (resp. bilinear) integro-differential operator in $(x,z)$, $d \in \mathbb{N}^*$ and $\Omega$ is an open subset of $\mathbb{R}^{d'}$, $d' \in \mathbb{N}^*$. 
If $\Omega \neq \mathbb{R}^m$, then some suitable boundary conditions on $\partial \Omega$ are enforced for $U$. The choice of $\mathbb{T}^d$ is made for simplicity, and other settings are possible.

Consider the sequence $\varepsilon_k = \frac{1}{k}$, for $k \in \mathbb{N}^*$. In the following, we forget the subscript $k$ for readability.
Following M\'etivier \cite{M}, we look for solutions $U$ under the form
\begin{equation}
U(t,x,z) \equiv  u\left( \frac{t}{\varepsilon}, \frac{x}{\varepsilon}, z\right),
\end{equation}
%TODO Write first abstract stuff and use hyperbolic scaling.
where $u(s,y,z)$ is $1$-periodic in $y_1,\cdots,y_d$. Assume that one can write
$$
\mathscr{L} U = \left[\frac{1}{\varepsilon} L u + R_1(u)\right] \left(\frac{t}{\varepsilon}, \frac{x}{\varepsilon} , z\right), \quad  \mathcal{Q}(U,U)= \left[\frac{1}{\varepsilon} Q(u,u) + R_2(u,u)\right] \left(\frac{t}{\varepsilon}, \frac{x}{\varepsilon} , z\right),
$$
where $L$ is independent of $\varepsilon$; on the other hand, although we do not write it explicitly, the operators $R_1, Q, R_2$ may depend on $\varepsilon$.
This leads to the study of the following abstract PDE:
\begin{equation}
\label
{e-oscillating}
\partial_s u  - L u = Q(u,u)  + \varepsilon( R_1(u)  +R_2(u,u)), \quad s\geq 0, \quad y \in \mathbb{T}^d,\quad z \in \Omega.
\end{equation}
%The purpose of the hyperbolic scaling is to make use of the {\em ellipticity} underlying in the (leading) linear differential operator $\mathcal{L}$. 
We finally assume the existence of a family of norms $(\| \cdot \|_{\delta, \delta' })_{\delta,\delta'>0}$ satisfying the following properties. For all $\delta, \delta'>0$, the corresponding function space 
$$X_{\delta, \delta'}:= \{ u (y,z), \, \|u \|_{\delta,\delta'} <  + \infty \}$$ 
(with possibly boundary conditions in $z$, if $\Omega \neq \mathbb{R}^{d'}$)
is compactly embedded into~$\langle v\rangle^m$~-~weighted Sobolev spaces $H^s$ (with $\langle z \rangle = \sqrt{1+ |z|^2}$), for all $m,s>0$. 

Moreover, for all  $0\leq \delta < \delta_1$, $0\leq \delta' < \delta_1'$, the following inequalities hold for all $u$:
 $$
\| u \|_{\delta, \delta'} \le  \| u \|_{\delta_1, \delta_1'}, \qquad  \| \partial_y u \|_{\delta, \delta'} \leq  \frac{\delta_1}{\delta_1-\delta}  \|  u \|_{\delta_1, \delta'}, \quad 
  \| \partial_z u \|_{\delta, \delta'} \leq  \frac{\delta_1'}{\delta_1'-\delta'}  \|  u \|_{\delta, \delta_1'}.
 $$

In what follows, we shall carry our analysis on the scaled system \eqref{e-oscillating}. 

\bigskip

We make the following structural assumptions on the abstract PDE \eqref{e-oscillating}.

\begin{enumerate}

%\item[\bf (H.1)] (\emph{Analytic norm}) 

\item[\bf (H.1)] (\emph{Spectral instability for $L$}). 
There exists an eigenfunction  g 
 associated to an eigenvalue $\lambda_0$, with $\Re \lambda_0>0$,  for $L$.
In other words, the set  
$$\Sigma^+ := \Big\{\lambda_0 \in \mathbb{C},  \Re \lambda_0 >0, \, \exists g\neq 0,  \,  L g = \lambda_0 g \, \Big\}$$
 is not empty.

%For each $n \in \mathbb{Z}^d$, let $L_n$ be defined by 
%$$ L_n u: = \langle L (e^{in\cdot y} u), e^{in \cdot y} \rangle_{L^2 (\mathbb{T}^d)}, $$ 
%for $u = u(s,v)$ and assume that TODO: doesn't make sense!
%$$ \gamma_0 : = \max_{\hat n \in \mathbb{S}^{d-1}} \Big\{ \Re \lambda >0 ~:~ \lambda \in \sigma (L_{\hat n})\Big\},$$
%in which $ \sigma (L_{\hat n})$ denotes the spectrum of $L_{\hat n}$. Let $\hat n_0$ and $\lambda_0 \in \sigma (L_{\hat n_0})$ so that $\gamma_0 = \Re\lambda_0$ is positive, and the corresponding eigenfunction $\hat g(v)$ of $L_{\hat n_0}$ satisfies $\| \hat g  \|_{\delta'_0} \lesssim 1,$ for some $\delta'_0>0$; furthermore, there is no open set in $\Omega$ on which $\hat g=0$. In addition, the eigenfunction function $g = e^{i \hat n_0\cdot y} \hat g(v)$, associated to $L$, satisfies $\|g  \|_{\delta, \delta'_0} \lesssim 1,$ for all $\delta>0$. 

%in which $ \sigma (L_{\hat n})$ denotes the spectrum of $L_{\hat n}$. Let $\hat n_0$ and $\lambda_0 \in \sigma (L_{\hat n_0})$ so that $\gamma_0 = \Re\lambda_0$ is positive, and the corresponding eigenfunction $\hat g(v)$ of $L_{\hat n_0}$ satisfies $\| \hat g  \|_{\delta'_0} \lesssim 1,$ for some $\delta'_0>0$; furthermore, there is no open set in $\Omega$ on which $\hat g=0$. In addition, fixing an $n_0 \in \mathbb{Z}^d$ so that $n_0 = |n_0|\hat n_0$, the eigenfunction function $g = e^{i n_0\cdot y} \hat g(v)$, associated to $L$, satisfies $\|g  \|_{\delta, \delta'_0} \lesssim 1,$ for all $\delta>0$. 

%TODO: time scaling....

\item[\bf (H.2)] (\emph{Loss of analyticity for the semigroup (in $y$ only)}) The semigroup $e^{Ls}$, associated to $L$, is well-defined in $X_{\delta, \delta'}$, for $s>0$ and $\delta'>0$ small enough.  Furthermore, there is $\gamma_0 >0$, such that the following holds.
\begin{itemize}

\item For all $\eta>0$, there exist $k_0\in [1, +\infty)$ and  $\lambda_0 \in \Sigma^+$ such that
\begin{equation}
\label{1}
\left| \frac{\Re \lambda_0}{k_0} -  \gamma_0\right| \leq \eta.
\end{equation}
Moreover there exists an eigenfunction $g$ for $L$, associated to $\lambda_0$, such that
$\|g \|_{\delta, \delta_0'}<+\infty $, for all $\delta>0$ and \emph{some} $\delta'_0>0$.%; furthermore, there is no open set in $\Omega$ on which $ g=0$.

\item For any $\Lambda>\gamma_0$, there are $C_\Lambda>0$, $\delta_1'>0$ such that for all $\delta- \Lambda s \geq 0$, $\delta' \in (0,\delta_1']$, and all $\varepsilon>0$,
\begin{equation}
\label{2}
\| e^{Ls} u\|_{\delta- \Lambda s, \delta'} \leq C_\Lambda \| u \|_{\delta, \delta'}, \quad \forall ~u \in X_{\delta, \delta'}.
\end{equation}

\end{itemize}

\item[\bf {\bf (H.3)}] (\emph{Commutator identity}) We have the identity
$$
[\partial_y, L] =0 .
$$

\item[\bf (H.4)] (\emph{Structure of $Q$}) $Q$ is bilinear and we have for all $\delta,\delta'>0$, and all $\varepsilon>0$,
$$
\| Q(f,h)\|_{\delta,\delta'} \leq C_0 \| f \|_{\delta,\delta'} (\| \partial_y h\|_{\delta,\delta'} + \| \partial_z h\|_{\delta,\delta'}), \qquad \forall f,h \in X_{\delta, \delta'},
$$
for some $C_0 >0$.

\item[\bf (H.5)] (\emph{Structure of $R_1$, $R_2$}) $R_1$ is linear and $R_2$ is bilinear. We have for all $\delta,\delta'>0$, and all $\varepsilon>0$,
$$
\| R_1(f)\|_{\delta,\delta'} \leq C_0( \| f \|_{\delta,\delta'} + \| \partial_y f\|_{\delta,\delta'} + \| \partial_z f\|_{\delta,\delta'}), \qquad \forall f \in X_{\delta, \delta'},
$$
$$
\| R_2(f,h)\|_{\delta,\delta'} \leq C_0 \| f \|_{\delta,\delta'} ( \| \partial_y h\|_{\delta,\delta'} + \| \partial_z h\|_{\delta,\delta'}) , \qquad \forall f,h \in X_{\delta, \delta'},
$$
for some $C_0 >0$.

\end{enumerate}

Let us make a few comments about {\bf (H.1)}--{\bf (H.5)}.

The norm $\| \cdot \|_{\delta,\delta'}$ has to be seen as an \emph{analytic} norm used to build a solution to \eqref{e-oscillating}.
The requested properties are classical in the context of spaces of real analytic functions, see e.g. \cite{Gr95}, \cite{MV}. Assumption {\bf (H.1)} yields a violent \emph{instability} for the operator $\mathscr{L}$. In the case $R_1=0$, the interpretation is clear: it reveals that $\mathscr{L}$ has an unbounded unstable spectrum. Indeed it means that for all $\varepsilon>0$,
$$
\mathscr{L} g\left(\frac{\cdot}{\varepsilon}, \cdot\right) = \frac{\lambda_0}{\varepsilon} g\left(\frac{\cdot}{\varepsilon},\cdot\right)
$$
i.e. $g(\frac{\cdot}{\varepsilon},\cdot)$ is an eigenfunction associated to the eigenvalue $\frac{\lambda_0}{\varepsilon}$, with $\Re \lambda_0>0$. In the case where $\mathscr{L}= {L}$ 
%(which holds in the examples studied in Sections \ref{s-Euler} and \ref{s-ill})
, it means that ${L}$ itself has these unstable features.

Assumption {\bf (H.2)} reveals a \emph{loss of analytic regularity} for the semigroup associated to $L$; we emphasize that the loss concerns only the $y$ variable, and not the $z$ variable. The constraint on the admissible losses  $\Lambda>\gamma_0$ in \eqref{2}  is \emph{sharp}, in the sense that it is the best one can hope for, in view of \eqref{1} and its possible consequences on the growth of the spectrum. It means in practice that this number $\gamma_0$ has to be seen as the supremum of some rescaled functional; see Sections \ref{s-Euler} and \ref{s-ill} for illustrations of these facts.
Note also that the eigenfunction $g$ has a very demanding regularity with respect to the first variable. In the context of real analyticity, it means in practice that $g$ has to be very well localized in the Fourier space (with respect to the first variable). 
 The assumption {\bf (H.2)} is certainly the most technical to check in practice, while assumption {\bf (H.3)} is  a simple computation. Note finally that in {\bf (H.4)} and {\bf (H.5)}, the losses of derivatives are only of order $1$, as usual for Cauchy-Kowalevsky type results.

\bigskip

The main result of this section is the following abstract ill-posedness Theorem:

\begin{theorem}\label{thm-abstract}
Assume {\bf (H.1)}--{\bf (H.5)}. 
For all $m,s \in \mathbb{N}$, $\alpha \in (0,1]$, $k \in \mathbb{N}$, there are families of solutions $(U_\varepsilon)_{\varepsilon>0}$ of \eqref{e-abstract}, times $ t_\varepsilon = \mathcal{O}(\varepsilon |\log \varepsilon|)$ and $(x_0,z_0) \in \mathbb{T}^d \times \Omega$ such that
\begin{equation}
\lim_{\varepsilon \to 0} \frac{ \| U_\varepsilon \|_{L^2([0,t_\varepsilon] \times \Omega_\varepsilon)}}{\| \langle z\rangle^m U_\varepsilon|_{t=0} \|_{H^s(\mathbb{T}^d \times \Omega)}}=+\infty
\end{equation}
where $\Omega_\varepsilon =B(x_0, \varepsilon^k) \times B(z_0, \varepsilon^k)$.
\end{theorem}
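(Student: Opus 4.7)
The plan is to build, for each small $\varepsilon>0$, a family of exact solutions $U_\varepsilon$ of \eqref{e-abstract} concentrating along the unstable eigenmode provided by (H.1)--(H.2), and then to compare the two sides of the inequality after unfolding the scaling $U_\varepsilon(t,x,z)=u_\varepsilon(t/\varepsilon,x/\varepsilon,z)$. Fix once and for all an integer $N=N(\alpha,s,m,k,d,d')$ whose size will be dictated at the end. Using (H.1)--(H.2) with $\eta$ small enough, produce $\lambda_0\in\Sigma^+$, a frequency parameter $k_0\in[1,\infty)$ with $|\Re\lambda_0/k_0-\gamma_0|\le \eta$, and an eigenfunction $g$ with $\|g\|_{\delta,\delta_0'}<\infty$ for every $\delta>0$. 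Prescribe rescaled initial data $u_\varepsilon|_{s=0}=A_\varepsilon g$ with $A_\varepsilon:=\varepsilon^N$, and seek a solution on the time interval $[0,s_\varepsilon]$ where $s_\varepsilon$ is defined by $A_\varepsilon e^{\Re\lambda_0 s_\varepsilon}=\kappa$ for a fixed small $\kappa>0$; this yields $s_\varepsilon=\mathcal{O}(|\log\varepsilon|)$ and hence $t_\varepsilon:=\varepsilon s_\varepsilon=\mathcal{O}(\varepsilon|\log\varepsilon|)$, as required.

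To build $u_\varepsilon$, I would write $u_\varepsilon=A_\varepsilon e^{\lambda_0 s}g+v_\varepsilon$, where the eigenmode term solves the linear part of \eqref{e-oscillating} exactly (and is compatible with the semigroup estimate by virtue of (H.3)), and solve
\begin{equation*}
v_\varepsilon(s)=\int_0^s e^{L(s-\tau)}\bigl[Q(u_\varepsilon,u_\varepsilon)+\varepsilon R_1(u_\varepsilon)+\varepsilon R_2(u_\varepsilon,u_\varepsilon)\bigr](\tau)\,d\tau
\end{equation*}
by a Picard iteration in a Cauchy--Kowalewski scale of analytic spaces $X_{\delta(s),\delta'}$, with $\delta(s):=\delta^*-\Lambda s$ shrinking linearly in $y$ and $\delta'\in(0,\delta_1']$ fixed in $z$. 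The semigroup bound \eqref{2} gives $\|e^{L(s-\tau)}\phi\|_{\delta(s),\delta'}\le C_\Lambda\|\phi\|_{\delta(\tau),\delta'}$; combined with the bilinear bounds (H.4)--(H.5) and the Cauchy estimates $\|\partial_y u\|_{\delta,\delta'}\le\frac{\delta_1}{\delta_1-\delta}\|u\|_{\delta_1,\delta'}$ and its $z$-analogue, this produces the standard Nishida--Ovsyannikov geometric summation, the $\varepsilon$-prefactor in front of $R_1,R_2$ making these terms lower-order. Tuning $(k_0,\delta^*)$ so that $\delta(s_\varepsilon)>0$ and so that the iteration stays within its radius of convergence throughout $[0,s_\varepsilon]$, one obtains a quantitative bound of the form $\|v_\varepsilon(s)\|_{\delta(s)/2,\delta'}\lesssim (A_\varepsilon e^{\Re\lambda_0 s})^2$, so that $|u_\varepsilon(s,y,z)|\ge\tfrac12 A_\varepsilon e^{\Re\lambda_0 s}|g(y,z)|$ wherever $|g|$ is bounded away from $0$ and $s\in[0,s_\varepsilon]$.

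Finally, I would unfold the scaling and compare. By the chain rule $\partial_x=\varepsilon^{-1}\partial_y$ and the compact embedding of $X_{\delta,\delta'}$ into weighted Sobolev spaces in (H.2),
\begin{equation*}
\|\langle z\rangle^m U_\varepsilon|_{t=0}\|_{H^s(\mathbb{T}^d\times\Omega)}=A_\varepsilon\,\|\langle z\rangle^m g(x/\varepsilon,z)\|_{H^s}\le C\varepsilon^{N-s}
\end{equation*}
up to harmless factors. For the numerator, choose $(y_0,z_0)\in\mathbb{T}^d\times\Omega$ at which $|g(y_0,z_0)|\ge c_0>0$ (possible by analyticity), set $x_0:=\varepsilon y_0$ modulo $\mathbb{Z}^d$, and perform the change of variables $s=t/\varepsilon$, $y=x/\varepsilon$ to obtain
\begin{equation*}
\|U_\varepsilon\|_{L^2([0,t_\varepsilon]\times\Omega_\varepsilon)}^2\;\gtrsim\; \varepsilon^{1+d}\,\bigl|B(y_0,\varepsilon^{k-1})\bigr|\,\bigl|B(z_0,\varepsilon^k)\bigr|\int_0^{s_\varepsilon}\!A_\varepsilon^2 e^{2\Re\lambda_0 s}\,ds\;\gtrsim\; c\,\kappa^2\,\varepsilon^{1+dk+d'k}/\Re\lambda_0,
\end{equation*}
so $\|U_\varepsilon\|_{L^2}\gtrsim\varepsilon^{(1+dk+d'k)/2}$. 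The ratio then behaves like $\varepsilon^{(1+dk+d'k)/2-\alpha(N-s)}$ up to harmless factors; choosing $N$ strictly larger than $s+(1+dk+d'k)/(2\alpha)$ makes the $\varepsilon$-exponent strictly negative and the ratio diverges as $\varepsilon\to 0$.

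The main obstacle is closing the Cauchy--Kowalewski iteration in the second step: because the admissible analytic loss $\Lambda>\gamma_0$ in (H.2) is sharp while the linear growth rate is $\Re\lambda_0\approx\gamma_0 k_0$, the derivative losses of (H.4)--(H.5) have to be absorbed entirely by the shrinkage of the analytic radius and not by any spectral gap. Making the Picard iterates contract geometrically on the long time interval $s_\varepsilon\sim|\log\varepsilon|$ requires a careful nested choice of radii $(\delta_n(s))_n$ whose decrements are tuned so that the cumulative analytic loss fits within the budget, and ensuring that the eigenmode evolves consistently with this scale without spoiling the quadratic bound on $v_\varepsilon$. This bookkeeping is the Ovsyannikov-type construction familiar from Grenier's work on Vlasov--Poisson and from Desjardins--Grenier~\cite{DG}, and the structural hypotheses (H.1)--(H.5) are tailored precisely so that it closes and yields the desired family $u_\varepsilon$.
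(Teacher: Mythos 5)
Your overall architecture is the same as the paper's (eigenmode ansatz $\varepsilon^{N}e^{\lambda_0 s}g$ plus a remainder solved by a Cauchy--Kowalevsky/analytic iteration, then unfolding the scaling and comparing $\|U_\varepsilon\|_{L^2([0,t_\varepsilon]\times\Omega_\varepsilon)}$ with $\|\langle z\rangle^m U_\varepsilon|_{t=0}\|_{H^s}^\alpha$), but your endgame contains a genuine gap. You run the construction up to the time $s_\varepsilon$ at which $A_\varepsilon e^{\Re\lambda_0 s_\varepsilon}=\kappa$ for a \emph{fixed} constant $\kappa$, and you claim the pointwise bound $\|v_\varepsilon(s)\|\lesssim (A_\varepsilon e^{\Re\lambda_0 s})^2$ up to that time. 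Neither follows from the Picard/Ovsyannikov scheme you describe under {\bf (H.1)}--{\bf (H.5)}. The only linear information available is the radius-loss bound \eqref{2} with constant $C_\Lambda$ (not a sharp growth bound for the inhomogeneous problem), so the Duhamel estimates in the time-weighted analytic norms produce a bound on the remainder of the form $(\text{final amplitude})^2\,|\log\varepsilon|^{\text{power}}$, uniform in $s$ rather than pointwise in $s$; this is exactly \eqref{rec1} in the paper, $\|w_n\|\le C\varepsilon^{2\kappa}|\log\varepsilon|^{1+2\gamma}$ with $\kappa=\frac{2\beta}{1+\beta}M>0$. For this to beat the linear mode near the final time (and for the contraction factor $C|\log\varepsilon|^{1+\gamma}(\|w_{n-1}\|+\|w_n\|)$ to be $<1/2$ on an interval of length $\sim|\log\varepsilon|$), the final amplitude must be an algebraically small power of $\varepsilon$: with a fixed $\kappa$ you would need $\kappa\,|\log\varepsilon|^{1+\gamma}$ small, which fails as $\varepsilon\to0$. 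Reaching $O(1)$ amplitude is the genuinely hard part of nonlinear instability arguments (Grenier-type higher-order approximate solutions, sharp inhomogeneous semigroup bounds), and nothing in {\bf (H.1)}--{\bf (H.5)} delivers it; the "careful nested choice of radii" you invoke addresses the analyticity budget but not this amplitude obstruction.

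The fix is easy and collapses your argument onto the paper's: stop strictly before the mode saturates, i.e.\ take the final amplitude to be $\varepsilon^{\theta}$ for a small $\theta>0$ (in the paper $s_\varepsilon=\frac{1-\beta}{1+\beta}\frac{M}{\lambda_0}|\log\varepsilon|$, final amplitude $\varepsilon^{\frac{2\beta}{1+\beta}M}$). Then the ratio is still forced to blow up provided the parameters are tuned as in \eqref{def-alpha1}--\eqref{e-smallbeta}, since one only needs $\theta+dk+\tfrac12<\alpha(N-s)$, and the $L^2$ lower bound can be taken on a time slab near $s_\varepsilon$ (as the paper does on $[t_\varepsilon-\varepsilon,t_\varepsilon]$), so no pointwise-in-$s$ quadratic bound on the remainder is needed. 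Two minor additional points: you should also use a norm whose radius shrinks at a rate $\gamma_1$ strictly larger than the semigroup loss $\Lambda>\gamma_0$ (the slack is what absorbs the singular time integrals coming from the one-derivative loss in {\bf (H.4)}--{\bf (H.5)}, via {\bf (H.3)} to commute $\partial_y$ with $e^{Ls}$), and when $\lambda_0$ is not real the lower bound $|u_\varepsilon|\gtrsim A_\varepsilon e^{\Re\lambda_0 s}|g|$ must avoid the discrete times where $\Re(e^{\lambda_0 s}g)$ vanishes.
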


In the case where the linear differential operator $L$ has constant coefficients, the theorem is due to M\'etivier (\cite{M}) using the power series approach. In applications to the equations we have in mind, the differential operator $L$ typically depends on variables $(x,z)$. Our functional framework is closer to that of Desjardins-Grenier \cite{DG}, who introduced an \emph{analytic} framework for studying nonlinear (Rayleigh-Taylor) \emph{instability}.

%TODO: put here OK ?
\begin{remark}We expect that this abstract framework can be useful to prove  ill-posedness for multi-phase Euler models, see e.g. \cite{Br97}. These models can be (formally) derived in the context of the quasineutral limit of the Vlasov-Poisson equation, see Grenier \cite{Gr95}.      %, in the elliptic regions. 
Whereas the one-dimensional model surely fits the local framework by M\'etivier \cite{M}, the multi-dimensional analogue appears to be nonlocal due to the pressure.%, and this should be treated by our abstract framework. %We leave the details for interested readers. 
 
\end{remark}

The choice of parameters we make below follows M\'etivier \cite{M}. Let $s \in \mathbb{N}$, $\alpha \in (0,1]$, $k \in \mathbb{N}$ be all arbitrary, but fixed. We take $M$ large enough and $\beta>0$ small enough such that
\begin{equation}
\label{def-alpha1}  \alpha' := \frac{M-s}{M} \alpha - \frac{1+2dk}{2M} >0,
\end{equation}
\begin{equation}
\label{e-smallbeta}
\beta M< \frac 12, \qquad \frac{2\beta}{1+ \beta}< \alpha'.
\end{equation}
Let  $\gamma_0>0$ satisfying all requested properties in {\bf (H.2)}. Let $\eta \in \left(0, \min \left(\frac{\gamma_0}{2}, \frac{\beta \gamma_0}{4}  \right)\right)$.
 We obtain a pair of eigenvalue and eigenfunction $(\lambda_0,g)$, and $k_0 \in  [1, +\infty)$, $\delta_0'>0$, such that the first point of {\bf (H.2)} is satisfied. We note that defining
\begin{equation}\label{def-l1}
\gamma_1 = (1+ \beta)\frac{\Re \lambda_0}{k_0},
\end{equation}
 we have  $\gamma_1> \gamma_0$ as well as $\frac{1}{2}\left( \frac{\Re \lambda_0}{k_0} + \gamma_1 \right)>\gamma_0$. We also define
$$ \delta_0 =  \frac{(1-\beta) M}{k_0}  | \log \varepsilon|.$$
Let $(x_0, z_0)$ be such that $g(x_0, z_0) \neq 0$. By continuity, there is $c>0$ such that for all small enough $\varepsilon$,
\begin{equation}\label{pt-g}
|g(x,z) |\ge c,\qquad   \qquad \forall ~(x,z) \in B(x_0, \varepsilon^{k}) \times B(z_0 , \varepsilon^{k}) \subset \Omega_\varepsilon,
\end{equation}
Let us assume that $\lambda_0$ is real (we will explain the general case at the end of the proof).

Unlike M\'etivier \cite{M}, our analysis relies on weighted in time analytic type norms, introduced by Caflisch in his proof of the Cauchy-Kowalevsky theorem (\cite{Caf}; see also \cite{DG}). Precisely, let us introduce the following norm 
\begin{equation}
\begin{aligned}
\| w \| = \sup_{0 \leq \delta \leq \delta_0} \sup_{0 \leq s \leq \frac{1}{\gamma_1}(\delta_0- \delta)} \Big [ 
\| w(s) \|_{\delta, \delta'}  + \left(\delta_0- \delta - \gamma_1 s \right)^{\gamma} \Big(  \| \partial_y w(s) \|_{\delta, \delta'} 
+ M^{-\gamma} |\log \varepsilon|^{-\gamma}  \| \partial_z w(s) \|_{\delta, \delta'} 
  \Big) \Big],
  \end{aligned}
\end{equation}
in which $\delta'$ is a shorthand for $\frac{\delta_0' k_0\delta }{M |\log \varepsilon|}$ and $\gamma$ is an arbitrary fixed number in $(0,1)$. We denote by $X$ the space of functions $w$ such that $\| w \|<+\infty$; it is well known that $(X,\| \cdot \|)$ is a Banach space.

Theorem \ref{thm-abstract} is a consequence of the following lemma, where we construct solutions in $X$ that capture the instability for the scaled system \eqref{e-oscillating}.

\begin{lemma}
\label{l-ill-posedness}
Under the assumptions {\bf (H.1)}--{\bf (H.5)}, there is $\varepsilon_0>0$ such that for all $\varepsilon \in (0,\varepsilon_0]$, 
there exists a solution $u$ of \eqref{e-oscillating} of the form
\begin{equation}
\label{solution}
u(s) = \varepsilon^M e^{\lambda_0s} g + w(s), \qquad \forall ~ s\in [0, s_\varepsilon],
\end{equation}
where $(\lambda_0,g)$ is defined as in {\bf (H.2)}, 
$$s_\varepsilon  = \frac{( {1-\beta} )M}{ k_0 \gamma_1}   |\log \varepsilon|= \frac{1-\beta}{1 + \beta}  \frac{M}{ \lambda_0}   |\log \varepsilon|$$ 
and $w$ is a remainder satisfying 
$$\| w \| \lesssim \varepsilon^{\frac{2\beta }{1+\beta} M}.$$
\end{lemma}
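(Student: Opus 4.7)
The plan is to set up a standard Cauchy-Kowalevsky-Caflisch fixed-point argument in the Banach space $(X, \|\cdot\|)$. Writing $u_{\mathrm{app}}(s) := \varepsilon^M e^{\lambda_0 s}g$, assumption \textbf{(H.1)} gives that $u_{\mathrm{app}}$ solves $\partial_s u_{\mathrm{app}} - Lu_{\mathrm{app}}=0$, so the remainder $w=u-u_{\mathrm{app}}$ must satisfy
\begin{equation}
\partial_s w - Lw = F_0 + F_{\mathrm{lin}}(w) + F_{\mathrm{quad}}(w),\qquad w(0)=0,
\end{equation}
where $F_0 := Q(u_{\mathrm{app}},u_{\mathrm{app}}) + \varepsilon R_1(u_{\mathrm{app}}) + \varepsilon R_2(u_{\mathrm{app}},u_{\mathrm{app}})$, $F_{\mathrm{lin}}(w)$ collects the linear-in-$w$ terms (which always come with at least one factor of $u_{\mathrm{app}}$ or $\varepsilon$), and $F_{\mathrm{quad}}(w)=Q(w,w)+\varepsilon R_2(w,w)$. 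I would seek $w$ as the fixed point of $\mathcal{T}(w)(s):=\int_0^s e^{L(s-\tau)}[F_0+F_{\mathrm{lin}}(w)+F_{\mathrm{quad}}(w)](\tau)\,d\tau$ inside the ball of radius $C\varepsilon^{2\beta M/(1+\beta)}$ in $X$.

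The main work is to estimate $\mathcal{T}$ in the Caflisch norm. Fix $\Lambda\in(\gamma_0,\gamma_1)$, for instance $\Lambda:=\tfrac12(\Re\lambda_0/k_0+\gamma_1)$, which is admissible by the second part of \textbf{(H.2)}. Given a source $F(\tau)$, to bound $\int_0^s e^{L(s-\tau)}F(\tau)\,d\tau$ at radius $(\delta,\delta')$ I would introduce the intermediate radius $\sigma(\tau) := \tfrac12\bigl[(\delta+\Lambda(s-\tau))+(\delta_0-\gamma_1\tau)\bigr]$, apply the semigroup bound from \textbf{(H.2)} to descend from $\sigma(\tau)$ to $\delta$ with multiplicative constant $C_\Lambda$, and then estimate $\|F(\tau)\|_{\sigma(\tau),\delta'}$ using \textbf{(H.4)}--\textbf{(H.5)} and the analytic-loss inequalities $\|\partial_y h\|_{\sigma,\delta'}\leq (\delta_0-\gamma_1\tau-\sigma)^{-1}\delta_0\|h\|_{\delta_0-\gamma_1\tau,\delta'}$ together with the analogous $\partial_z$ bound; the remaining gap in the triangle, namely $\tfrac12(\delta_0-\delta-\gamma_1\tau-\Lambda(s-\tau))$, is positive and integrable against the Caflisch weights $(\delta_0-\delta-\gamma_1 s)^{-\gamma}$ (one uses here $\Lambda<\gamma_1$). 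Assumption \textbf{(H.3)} lets one commute $\partial_y$ freely through $e^{L(s-\tau)}$ when bounding $\|\partial_y\mathcal{T}(w)\|$, while for $\partial_z$ the absence of a $z$-loss in \textbf{(H.2)} combined with the weight $M^{-\gamma}|\log\varepsilon|^{-\gamma}$ and the choice $\delta'=\delta_0'k_0\delta/(M|\log\varepsilon|)\leq\delta_0'(1-\beta)<\delta_1'$ absorbs the polynomial cost of trading one $z$-derivative. The outcome is an estimate of the form $\|\mathcal{T}(w)\|\lesssim \|\mathcal{T}(0)\| + \bigl(\|u_{\mathrm{app}}\|_\infty+\varepsilon\bigr)\|w\|+\|w\|^2$.

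For the source size, observe that by \textbf{(H.2)} the eigenfunction obeys $\|g\|_{\delta,\delta_0'}<\infty$ for every $\delta>0$, whence $\|u_{\mathrm{app}}(\tau)\|_{\delta,\delta'}\lesssim \varepsilon^Me^{\lambda_0\tau}$ throughout the Caflisch triangle; using the definition of $s_\varepsilon$ and $\gamma_1=(1+\beta)\lambda_0/k_0$ one computes $\lambda_0 s_\varepsilon=\tfrac{1-\beta}{1+\beta}M|\log\varepsilon|$, so $\sup_{\tau\leq s_\varepsilon}\|u_{\mathrm{app}}(\tau)\|_{\delta,\delta'}\lesssim \varepsilon^{2\beta M/(1+\beta)}$. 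Plugging this into \textbf{(H.4)}--\textbf{(H.5)} and integrating in $\tau$ gives $\|\mathcal{T}(0)\|\lesssim \varepsilon^{4\beta M/(1+\beta)}+\varepsilon^{1+2\beta M/(1+\beta)}$, which is much smaller than $\varepsilon^{2\beta M/(1+\beta)}$. The coefficient in front of $\|w\|$ is $O(\varepsilon^{2\beta M/(1+\beta)}+\varepsilon)$, and the quadratic contribution is $O(\|w\|^2)$; both are absorbed into the ball for $\varepsilon$ small enough, and the same argument applied to differences $\mathcal{T}(w_1)-\mathcal{T}(w_2)$ gives a contraction. When $\lambda_0$ is complex one replaces $u_{\mathrm{app}}$ by $2\Re(\varepsilon^M e^{\lambda_0 s}g)$; the source estimate is identical since only the modulus of $e^{\lambda_0 s}$ enters the bounds.

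The main obstacle is the simultaneous accounting of the three radii $(\sigma(\tau),\delta(\tau),\delta'(\delta))$, the two loss rates $\Lambda$ and $\gamma_1$, the Caflisch weights in $y$, and the $z$-weight $M^{-\gamma}|\log\varepsilon|^{-\gamma}$. The delicate inequality $\Lambda<\gamma_1$ (itself only possible thanks to the slack $\gamma_1-\gamma_0>0$ encoded in the extra factor $(1+\beta)$ in the definition of $\gamma_1$) is what guarantees integrability of the kernel in the Caflisch iteration; without it the scheme would diverge. Once this linear algebra of radii is set correctly, the rest is a standard verification of contractivity.
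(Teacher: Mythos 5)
Your proposal follows essentially the same route as the paper's proof: the same approximate solution $\varepsilon^M e^{\lambda_0 s} g$, a Duhamel/fixed-point construction of $w$ in the Caflisch-weighted analytic norm, the same loss rate $\Lambda=\tfrac12(\Re\lambda_0/k_0+\gamma_1)$ with the midpoint-radius device and the slack $\gamma_1-\Lambda>0$ to make the weights integrable, the same separate treatment of $\partial_y$ (via \textbf{(H.3)}) and $\partial_z$, and the same remark for complex $\lambda_0$. The only immaterial difference is that the paper runs an explicit iteration $w_n$ with an induction and contraction estimate rather than a Banach fixed point on a ball (and obtains the slightly sharper bound $\|w\|\lesssim\varepsilon^{2\kappa}|\log\varepsilon|^{1+2\gamma}$, which of course implies the stated $\varepsilon^{\kappa}$ bound).
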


\begin{proof}%[Proof of Lemma~\ref{l-ill-posedness}]
%In this proof, we shall write $\gamma_1 =\Re  \lambda_0 + \nu$, for some $\nu>0$,
We set 
$$u_\mathrm{app}(s) := \varepsilon^M e^{Ls} g = \varepsilon^M e^{\lambda_0s} g.$$
It follows directly from the definition and the assumption $\|g \|_{\delta_0, \delta_0'}\lesssim 1$ that 
$$
\|  u_\mathrm{app} \| \le C \varepsilon^{M} |\log \varepsilon|^\gamma e^{\frac{\lambda_0 \delta_0}{\gamma_1}} \le C \varepsilon^{\kappa} |\log \varepsilon|^\gamma, \qquad \kappa :=\frac{2\beta }{1+\beta}M,
$$
in which the logarithmic loss is due to the weight in time in the norm $\| \cdot \|$. Next, we observe that by definition,
$ u_\mathrm{app} $ is a solution of
$$
\partial_s  u_\mathrm{app}  - L  u_\mathrm{app}  = Q( u_\mathrm{app} , u_\mathrm{app} ) + \varepsilon (R_1(u_\mathrm{app})+ R_2(u_\mathrm{app}, u_\mathrm{app})) + R_\mathrm{app}
$$
where, thanks to the assumptions {\bf (H.4)}-{\bf (H.5)}, the remainder $ R_\mathrm{app} $ satisfies the estimate
$$
\|  R_\mathrm{app}  \| \leq C_\mathrm{app}   \varepsilon^{2\kappa}  |\log \varepsilon|^{2\gamma}.
$$
(Here, we have used the fact that  $\beta M < 1/2$, so that $\varepsilon \le \varepsilon^\kappa$.)
%The integer $N$ will be fixed later.
%
\bigskip

Our goal is now to solve the equation for the difference $w = u - u_\mathrm{app}$: 
\begin{equation}
\label{eq-perturb}
\begin{aligned}
\partial_s w  - L  w  
&= Q( w, w) - Q(u_\mathrm{app},w) - Q(w,u_\mathrm{app})  + \varepsilon(R_1(w)+ R_2(w,w)) 
\\& \quad - \varepsilon (R_2(u_\mathrm{app},w) + R_2(w,u_\mathrm{app})) -  R_\mathrm{app}
\end{aligned}\end{equation}
with $w|_{s=0}=0$. Then $u= w + u_\mathrm{app}$ solves the equation \eqref{e-oscillating} as desired. To that purpose, let us study the following approximation scheme:
$$\begin{aligned}
\partial_s w_{n+1}  - L  w_{n+1}  
&= Q( w_{n}, w_{n}) - Q(u_\mathrm{app},w_{n}) - Q(w_{n},u_\mathrm{app})  + \varepsilon(R_1(w_n)+ R_2(w_n,w_n))  
\\&\quad -  \varepsilon (R_2(u_\mathrm{app},w_n) + R_2(w_n,u_\mathrm{app}))  - R_\mathrm{app}
\end{aligned}$$
with $w_{n+1}|_{s=0}=0$. Set $w_0=0$.

% In what follows, the symbol 
%$A\lesssim B$ means there exists a polynomial $P$ (which does not depend on $n$) such that 
%$A \leq B P(|\log \varepsilon|^\gamma)$. 
We shall prove that for $\varepsilon>0$ small enough, we can make the scheme converge and that the following estimates hold for all $n\geq 0$, % and there is a constant $C_1>0$ such that for all $n\geq 0,
\begin{equation}
\label{rec1}
\| w_n \| \le   C  \varepsilon^{2 \kappa} |\log \varepsilon|^{1+ 2\gamma},
\end{equation}
for some universal constant $C$ (independent of $n$). In addition, for all $n\geq 1$,
\begin{equation}
\label{rec2}
\| w_{n+1}-w_n\| \leq \frac{1}{2} \| w_{n}-w_{n-1}\|.
\end{equation}

By induction, assume that \eqref{rec1} is true for all $k \leq n$. We have
\begin{align*}
w_{n+1}(s) &= \int_0^s e^{(s-\tau)L} \Big [Q( w_{n}, w_{n}) - \Big ( Q(u_\mathrm{app},w_{n}) + Q(w_{n},u_\mathrm{app})   \Big)
\\&\qquad  + \varepsilon \Big (R_1(w_n)+ R_2(w_n,w_n) - R_2(u_\mathrm{app},w_n) - R_2(w_n,u_\mathrm{app} ) \Big)   - R_\mathrm{app}  \Big] \, d\tau \\
&=: I_1(s) + I_2(s) + I_3(s) +  I_4(s).
\end{align*}
We claim that there are $C_0>0$, $\gamma'>0$ and $\varepsilon_0>0$ (independent of $n$) such that for all $\varepsilon\in(0,\varepsilon_0]$, the following estimates hold.

\noindent $\bullet$ For the non-linear term:
$$
\| I_1 \| \leq C_0 \| w_n \|^2 |\log \varepsilon|^{1+\gamma} \leq  C_ 0  \varepsilon^{4 \kappa} |\log \varepsilon|^{1+2\gamma +  \gamma'} . %\leq \frac{C_N}{3} \varepsilon^{N \kappa}.
$$
\noindent $\bullet$  For the linear term:
$$
\| I_2 \| \leq C_0'  \| w_n \| \| u_\mathrm{app}\|  |\log \varepsilon|^{1+\gamma}  \leq   C_ 0 \varepsilon^{1+ 2 \kappa}  |\log \varepsilon|^{1+2\gamma + \gamma' }. %\leq \frac{C_N}{3} \varepsilon^{N \kappa}.
$$
\noindent $\bullet$  For the first remainder:
$$
\| I_3 \| \leq C_0'  \varepsilon  \| w_n \| (1+ \| w_n \|)   |\log \varepsilon|^{1+\gamma}  \leq  C_0 \varepsilon^{1+\kappa} |\log|^{1+2\gamma + \gamma'}.%2 C_ 0' C_\mathrm{app} \varepsilon^{2 \kappa} \varepsilon |\log \varepsilon|^{1+\gamma} %\leq \frac{C_N}{3} \varepsilon^{N \kappa}.
$$
\noindent $\bullet$  For the second remainder:
$$
\| I_4 \| \leq C_\mathrm{app} \varepsilon^{2\kappa} |\log \varepsilon|^{1+ 2\gamma }.
$$
This shows that imposing $\varepsilon$ small enough (but independently of $n$), \eqref{rec1} is satisfied at rank $n+1$, and thus closes the induction argument.

It remains to justify the above estimates. We shall only provide details of the computations for $I_1$, the other ones being similar.
For all 
$\delta \in [1,\delta_0 -\gamma_1 s) $, we set $  \delta' = \frac{\delta_0' k_0 \delta}{M |\log \varepsilon|}$ and $\Lambda := \frac{ \lambda_0}{2k_0} +\frac{\gamma_1}{2}$. Note that $\Lambda> \gamma_0$.
We write $\gamma_1 = \frac{\lambda_0}{k_0} +2 \nu$ with $\nu := \frac{\beta \lambda_0}{2k_0}$. We get, using {\bf (H.2)} and {\bf (H.4)},
\begin{align*}
\| I_1 (s) \|_{\delta, \delta'} &\lesssim   \int_0^s \| e^{(s-\tau)L} Q(w_{n}, w_{n}) \|_{\delta, \delta'}  \, d\tau \\
&\lesssim   \int_0^s \| Q(w_{n}, w_{n}) \|_{\delta + \Lambda(s- \tau), \delta'}  \, d\tau \\
&\lesssim   \int_0^s \| w_{n}(\tau)\|_{\delta + \Lambda(s- \tau), \delta'} \left(\| \partial_y w_{n} (\tau)\|_{\delta + \Lambda(s- \tau), \delta'}  + \| \partial_z w_{n} (\tau)\|_{\delta + \Lambda(s- \tau), \delta'} \right)\, d\tau \\
&\lesssim   \int_0^s \| w_{n}\|^2\left(\delta_0- \delta - \Lambda s - \nu \tau \right)^{-\gamma} (1+ M^{\gamma} |\log \varepsilon|^{\gamma})
  \, d\tau, 
 \end{align*}
 since $\delta' \leq \delta' + \frac{\delta_0' k_0 \Lambda(s- \tau)}{M|\log \varepsilon|}$. We thus get
\begin{align*}
 \| I_1 (s) \|_{\delta, \delta'} &\lesssim  (1+ M^{\gamma} |\log \varepsilon|^{\gamma})\| w_{n}\|^2 \int_0^s  \left(\delta_0- \delta - \Lambda s - \nu \tau \right)^{-\gamma}  
 \, d\tau 
 \\
   &\lesssim  (1+ M^{\gamma} |\log \varepsilon|^{\gamma})\| w_{n}\|^2 \frac{1}{1-\gamma} \frac{1}{\nu}   \left(\delta_0- \delta - \Lambda s  \right)^{1-\gamma}   
\\   &\lesssim  (1+ M^{\gamma} |\log \varepsilon|^{\gamma})  |\log \varepsilon|^{1-\gamma} \| w_{n}\|^2 ,%  \\
% &=: \varepsilon^{2 \kappa} |\log \varepsilon|^{\varphi_\gamma(k)}
\end{align*}
recalling that $\delta_0 = \frac{ (1-\beta) M}{k_0} | \log \varepsilon| $. Likewise, by {\bf (H.3)}, {\bf (H.2)} and {\bf (H.4)}, we obtain 
\begin{align*}
\| \partial_y I_1(s) \|_{\delta, \delta'} &\lesssim   \int_0^s \| \partial_y  e^{(s-\tau)L} Q(w_{n}, w_{n}) \|_{\delta, \delta'}  \, d\tau \\
&\lesssim   \int_0^s \| e^{(s-\tau)L} \partial_y Q(w_{n}, w_{n}) \|_{\delta, \delta'}  \, d\tau \\
&\lesssim   \int_0^s \|\partial_y [Q(w_{n}, w_{n})] \|_{\delta + \Lambda(s- \tau), \delta'}  \, d\tau \\
&\lesssim   \int_0^s \frac{2\delta_0}{(\delta_0- \delta - \Lambda s -\nu\tau )}\|Q(w_{n}, w_{n}) \|_{\frac{\delta_0 - \gamma_1 \tau}{2}+\frac{\delta + \Lambda(s- \tau)}{2}, \delta'}  \, d\tau 
\\
&\lesssim   \int_0^s \| w_{n}\|^2 \delta_0 \left(\delta_0- \delta - \Lambda s - \nu \tau \right)^{-1-\gamma} (1+M^\gamma |\log \varepsilon|^{\gamma}) \, d\tau
\\
&\lesssim   \| w_{n}\|^2 |\log \varepsilon|^{1+\gamma}  \left(\delta_0- \delta - \Lambda s - \nu s\right)^{-\gamma} .
 \end{align*}
Note that $\Lambda + \nu = \gamma_1$. Consequently,  
we deduce 
\begin{equation*}
\| \partial_y I_1 (s) \|_{\delta, \delta'}  \lesssim   \| w_{n}\|^2 |\log \varepsilon|^{1+\gamma} \left(\delta_0- \delta - \gamma_1 s   \right)^{-\gamma} .
% \varepsilon^{4 \kappa} |\log \varepsilon|^{2(1+\gamma) +1 }\left(\delta_0- \delta - \gamma_1 t   \right)^{-\gamma} 
 \end{equation*}
Now we use that there is no loss in the $z$ variable for the semigroup. Recalling  $  \delta' = \frac{\delta_0' k_0\delta}{M |\log \varepsilon|}$ and $\gamma_1 = \Lambda + \nu$, we get with similar computations
\begin{align*}
\| \partial_z I_1 (s) \|_{\delta, \delta'} &\lesssim   \int_0^s \| \partial_z e^{(s-\tau)L}  Q(w_{n}, w_{n}) \|_{\delta, \delta'}  \, d\tau \\
&\lesssim   \int_0^s \|e^{(s-\tau)L}[Q(w_{n}, w_{n})] \|_{\delta, \frac{\delta_0' k_0}{M|\log \varepsilon|} \left[\frac{\delta_0 - \gamma_1 \tau}{2}  + \frac{\delta}{2}\right]}   \delta_0\left(\delta_0- \delta - \gamma_1 \tau \right)^{-1}  \, d\tau  
 \\
&\lesssim \int_0^s \|Q(w_{n}, w_{n}) \|_{\delta+ \Lambda(s-\tau), \frac{\delta_0' k_0 }{M|\log \varepsilon|} \left[\frac{\delta_0 - \gamma_1 \tau}{2}  + \frac{\delta}{2}\right]}  \delta_0\left(\delta_0- \delta - \Lambda \tau - \nu \tau \right)^{-1} \, d\tau \\
&\lesssim \int_0^s \|Q(w_{n}, w_{n}) \|_{\delta+ \Lambda(s-\tau), \frac{\delta_0' k_0 }{M|\log \varepsilon|} \left[\frac{\delta_0 - \gamma_1 \tau}{2}  + \frac{\delta}{2}\right]}  \delta_0\left(\delta_0- \delta - \Lambda s - \nu \tau \right)^{-1} \, d\tau \\
&\lesssim   \int_0^s  \| w_{n}\|^2  \frac{\delta_0 (1+ M^\gamma|\log \varepsilon|^\gamma)}{(\delta_0- \delta - \Lambda s -\nu\tau )^{\gamma +1}}  \, d\tau \\
&\lesssim \| w_{n}\|^2    |\log \varepsilon|^{1+\gamma}  \left(\delta_0- \delta - \gamma_1 s   \right)^{-\gamma}.
%&\lesssim   \int_0^t \| w_{n}\| \| w_{n} \| \left(\delta_0- \delta - \Lambda t - \nu \tau \right)^{-\gamma-1} (1+ |\log \varepsilon|^{-\gamma})
 \end{align*}
We end up with the claimed estimate for $\|I_1\|$, using the induction assumption on $\| w_n\|$. The contraction estimates are now straightforward; we have indeed for all $n \geq 2$,
$$
\| w_{n+1}- w_{n} \| \leq C_1 |\log \varepsilon|^{1+\gamma} ( \|w_{n-1}\| + \|w_{n}\|) \|\| w_{n}- w_{n-1} \|   + C_2 \varepsilon^\kappa |\log \varepsilon|^{1+\gamma} \|\| w_{n}- w_{n-1} \|.
$$
By \eqref{rec1}, we have
$$
 \|w_{n-1}\| + \|w_{n}\| \lesssim \varepsilon^{2\kappa} |\log \varepsilon|^{1+2\gamma},
$$
so that by imposing $\varepsilon>0$ small, we obtain \eqref{rec2}.

Since $(X,\| \cdot \|)$ is a Banach space, we get a solution $w$ of \eqref{eq-perturb}, satisfying in addition the estimate
$$
\| w \| \leq  C\varepsilon^{2\kappa} |\log \varepsilon|^{1+2\gamma}.
$$
\end{proof}

Finally, we complete the proof of Theorem~\ref{thm-abstract}: 
\begin{proof}[Proof of Theorem~\ref{thm-abstract}]
Consider the initial condition $U_\varepsilon|_{t=0}(x,z)  = \varepsilon^M g\left( \frac{x}{\varepsilon}, z \right)$ for \eqref{e-abstract}.
We note that
$$
\| \langle z \rangle^m U_\varepsilon|_{t=0} \|^\alpha_{H^s} \leq C' \varepsilon^{\alpha(M-s)}
$$
By Lemma~\ref{l-ill-posedness}, we obtain a solution $ u_\varepsilon(s,y,z)$ of \eqref{e-oscillating} with initial condition $\varepsilon^M g(y,z)$, satisfying~\eqref{solution}. Thus we get a solution $U_\varepsilon(t,x,z) =  u_\varepsilon \left(\frac{t}{\varepsilon}, \frac{x}{\varepsilon}, z\right)$ for \eqref{e-abstract}. Consider the time
$$s_\varepsilon  = \frac{1-\beta}{1 + \beta} \frac{M}{\lambda_0} |\log \varepsilon|$$
and let $t_\varepsilon = \varepsilon s_\varepsilon$. Using the embedding in $L^2$, \eqref{solution}, and the lower bound \eqref{pt-g}, we have, for some constants $\theta_1, \theta_1'>0$ that are independent of $\varepsilon$:
$$
\begin{aligned}
\| U_\varepsilon  \|_{ L^2([0,t_\varepsilon]\times \Omega_\varepsilon )} &=  \|\varepsilon^M e^{L t/\varepsilon} g (\frac{\cdot}{\varepsilon}, \cdot)+ w  (\frac{\cdot}{\varepsilon}, \cdot)\|_{L^2([0,t_\varepsilon]\times  \Omega_\varepsilon )} \\
 &\geq  \|\varepsilon^M e^{Lt/\varepsilon} g(\frac{\cdot}{\varepsilon}, \cdot) + w (\frac{\cdot}{\varepsilon}, \cdot)  \|_{L^2([t_\varepsilon - \varepsilon,t_\varepsilon]\times B(x_0, \varepsilon^{k}) \times B(z_0, \varepsilon^{k}) )} \\
&\geq \theta_1 \varepsilon^{\kappa} \varepsilon^{dk +\frac 12}   -  \| w  (\frac{\cdot}{\varepsilon}, \cdot)\|_{L^2( [t_\varepsilon - \varepsilon,t_\varepsilon] \times B(x_0, \varepsilon^{k}) \times B(z_0, \varepsilon^{k}) )}\\
&\geq \theta_1\varepsilon^{\kappa} \varepsilon^{dk+\frac 12}  -   C \varepsilon^{dk+\frac 12} \varepsilon^{2\kappa} |\log \varepsilon|^{\gamma_1}\\
&\geq \theta_1'\varepsilon^{\kappa} \varepsilon^{dk+\frac 12} 
\end{aligned}
$$
for sufficiently small $\varepsilon$. We recall that $\kappa = \frac{2\beta}{1+\beta} M$. By a view of the choice on the parameters in \eqref{def-alpha1} and \eqref{e-smallbeta}, we get 
$$
\frac{\| U_\varepsilon \|_{ L^2([0,t_\varepsilon]\times \Omega_\varepsilon )} }{\|\langle z \rangle^m U_\varepsilon|_{t=0} \|^\alpha_{H^s}} \geq \frac{\theta'_1}{C'}\varepsilon^{M\left(\frac{2\beta}{1+ \beta}-\alpha'\right)},
$$
which tends to infinity as $\varepsilon \to 0$. This concludes the proof of the theorem when $\lambda_0$ is real.
In the general case, the modifications are standard, see e.g. M\'etivier \cite{M}. In Lemma~\ref{l-ill-posedness}, one has to replace~\eqref{solution} by
\begin{equation}
u(s) = \varepsilon^M \Re (e^{\lambda_0s} g) + w(s), \qquad \forall ~ s\in [0, s_\varepsilon],
\end{equation}
meaning that instead of comparing to a pure exponentially growing mode,  we have to compare $u$ to an exponentially growing mode multiplied by an oscillating function. The above analysis can be performed again, making sure to avoid the (discrete) times when this oscillating function cancels.

\end{proof}

\section{Ill-posedness of the hydrostatic Euler equations}\label{s-Euler}

In this section, we give the proof of Theorem \ref{t-hydro}, establishing the ill-posedness of the hydrostatic Euler equations \eqref{hydro}, which we write in the stream-vorticity formulation: 
\begin{equation}\label{vort00}
\partial_t  \Omega + U \partial_x  \Omega + V \partial_z  \Omega = 0,
\end{equation}
in which $U := \partial_z  \Phi$, $ V := -\partial_x  \Phi$, and the stream function $ \Phi$ solves the elliptic problem: 
\begin{equation}\label{def-stream}
\partial_z^2  \Phi =  \Omega , \qquad  \Phi_{\vert_{z=\pm 1}} = 0.
\end{equation}

We shall prove in this section how Theorem \ref{t-hydro} follows from our abstract ill-posedness framework. We work with the analytic function space $X_{\delta, \delta'}$, equipped with the following norm: 
\begin{equation}\label{vort-norm0}
 \| \omega\|_{\delta, \delta'}: =\sum_{n\in \mathbb{Z}} \sum_{k\ge 0} \|\partial_z^k \omega_n \|_{L^2([-1,1])} \frac{|\delta'|^{k }}{ k!} e^{\delta |n| } ,\end{equation}
for any $\delta, \delta'>0$, in which $\omega_n = \langle \omega, e^{inx}\rangle_{L^2(\mathbb{T})}$ stands for the Fourier coefficients of $\omega$ with respect to $x$-variable. We also denote by $X_{\delta'}$ the $z$-analytic function space, equipped with the following norm:  
\begin{equation}\label{vort-norm0z}  \| \omega\|_{\delta'}: = \sum_{k\ge 0} \|\partial_z^k \omega \|_{L^2([-1,1])} \frac{|\delta'|^{k }}{ k!} .\end{equation}

We study ill-posedness near a well-chosen shear flow, i.e. $ \Omega = U'(z)$. 
We assume that $U$ is real analytic so that the following norm 
\begin{equation}\label{U-analytic}
 ||| U |||_{\delta'}: = \sum_{k\ge 0} \|\partial_z^k U' \|_{L^\infty([-1,1])} \frac{|\delta'|^{k }}{ k!} 
 \end{equation}
 is finite,  for some $\delta'>0$. 

By a view of the analytic norms \eqref{vort-norm0}-\eqref{vort-norm0z}, we have for all $0<\delta'<\delta_1'$,
$$ \| \partial_z\omega\|_{\delta'} \le \frac{\delta_1'}{\delta_1' - \delta'} \| \omega\|_{\delta_1'},$$
and so 
$$ \| \partial_z \omega\|_{\delta, \delta'} \le \frac{\delta_1'}{\delta_1' - \delta'} \| \omega\|_{\delta, \delta_1'}.$$
We can argue similarly for $\| \partial_y\omega\|_{\delta, \delta'}$.%,  the assumption {\bf (H.1)} follows.

We write the perturbed solution in the fast variables as follows: 
$$
 \Omega(t,x,z) =U'(z) +  \omega \left(\frac t\varepsilon, \frac x\varepsilon, z\right), \qquad \Phi(t,x,z) = \int_{-1}^z U(\theta)\; d\theta + \varphi\left(\frac t\varepsilon, \frac x\varepsilon, z\right).$$
Let $(s,y) =  (\frac t\varepsilon, \frac x\varepsilon)$. The function $\omega(s,y,z)$ solves 
\begin{equation}\label{pert-vort}
\partial_s \omega - \mathcal{L}\omega = - \partial_z \varphi \partial_y \omega + \partial_y \varphi \partial_z \omega,
\end{equation}
in which the linearized operator is defined by 
\begin{equation}\label{Edef-cL}
\mathcal{L} \omega: = - U \partial_y \omega  + \partial_y \varphi \,  U'' , \qquad \partial_z^2 \varphi = \omega, \qquad \varphi_{\vert_{z=\pm 1}} = 0.
\end{equation}

To treat the loss of derivatives from each quantity in the quadratic term $\partial_y \varphi \partial_z \omega$, we further write the above equation in a matrix form.  Set 
 \begin{equation}\label{def-L-E} 
 L: = \begin{pmatrix} \mathcal{L} &0 &0 \\ 0 &  \mathcal{L} &0\\ 0& -U' + U'' \partial_z \varphi(\cdot) + U''' \varphi(\cdot) & - U \partial_y \end{pmatrix} ,\end{equation}
in which by convention, $\varphi(W)$ solves $\partial_z^2 \varphi(W) = W$ with $\varphi_{\vert_{z=\pm 1}} = 0$. Similarly, for any two vector fields $V = (v_1, v_2, v_3)$ and $W = (w_1, w_2, w_3)$, we set 
$$ Q(V,W): = \begin{pmatrix} - \partial_z \varphi (v_1) w_2 + \varphi(v_2) w_3 \\ - \partial_z \varphi (v_2)  w_2 - \partial_z \varphi (v_1) \partial_y w_2 + \partial_y \varphi(v_2) w_3 + \varphi(v_2) \partial_y w_3\\  -v_1 w_2 - \partial_z \varphi (v_1) \partial_z w_2 + \partial_z \varphi(v_2) w_3 + \varphi(v_2) \partial_z w_3 \end{pmatrix} .$$

It follows that $\omega$ solves \eqref{pert-vort} if and only if the function 
$$ W: = \begin{pmatrix} \omega \\ \partial_y \omega \\ \partial_z \omega \end{pmatrix} $$
solves 
\begin{equation}\label{pert-vortM}
\partial_s W - LW= Q(W,W). 
\end{equation}

We shall show the ill-posedness of \eqref{pert-vortM} by directly checking the assumptions {\bf (H.1)}--{\bf (H.5)} made in our abstract ill-posedness framework.

 \subsection{Unbounded unstable spectrum of the linearized operator}
 
Our starting point is the work by Renardy \cite{Renardy}, in which he showed that the linearized hydrostatic Euler system near certain shear flows $U(z)$ possesses ellipticity or unbounded unstable spectrum.
Indeed, let us study the linearization near $U(z)$:
\begin{equation}\label{vort-lin}
\partial_t \omega + U \partial_y \omega  - \partial_y   \varphi \,  U'' = 0, \qquad \partial_z^2 \varphi = \omega, \qquad \varphi_{\vert_{z=\pm 1}} = 0,
\end{equation}
and search for a growing mode of the form 
\begin{equation}
\label{grow-H}
\omega = e^{in (y-ct)} \hat \omega(z),
\end{equation}
with $\Im c \neq 0$ and $\hat \omega = \partial_z^2 \hat \varphi$. The stream function $\hat \varphi$ then solves the  Rayleigh problem:  
\begin{equation}\label{Rayleigh} (U-c)\partial_z^2\hat  \varphi - U'' \hat \varphi = 0, \qquad \hat \varphi_{\vert_{z=\pm 1}} = 0.\end{equation}
This is a very classical problem in fluid mechanics (see for instance the recent work \cite{GGN}). There are two independent solutions of the Rayleigh problem: 
\begin{equation}\label{hom-phi} \hat \varphi_1 = U - c, \qquad \hat \varphi_2 = (U-c) \int_{-1}^z \frac{1}{(U(z')-c)^2}\; dz',\end{equation}
whose Wronskian determinant is $W[\hat \varphi_1, \hat \varphi_2] = \partial_z \hat \varphi_2 \hat \varphi_1 - \partial_z \hat \varphi_1 \hat \varphi_2 = 1$. The pair $(\hat \varphi, c)$ solves the Rayleigh problem if and only if $c$ is a zero of the (Evans) function
\begin{equation}
\label{evans}
 D(c) : = \hat \varphi_1 (-1) \hat \varphi_2(1) - \hat \varphi_1 (1) \hat \varphi_2(-1).
 \end{equation}
This precisely means that $c$ has to solve the equation 
\begin{equation}\label{un-condR} \int_{-1}^1 \frac{1}{(U(z) - c)^2}\; dz =0,\end{equation}
(see also \cite[Theorem 1]{Renardy}). As an explicit example, one can take $U(z) = \mathrm{tanh}(\frac z{d_1})$ for small $d_1>0$ as shown in \cite{CM}. Since $c$ does not depend on $n$, the unstable spectrum is unbounded. Let us summarize this discussion in the following statement:

\begin{lemma}\label{lem-unstable} The linearized operator $\mathcal{L}$  possesses a growing mode of the form \eqref{grow-H} if and only if $c$ is a zero of the Evans function~\eqref{evans}. If such a growing mode exists, the unstable spectrum is unbounded, containing all the points $\lambda = - i n c$, with $n \in \mathbb{Z}$ such that $n \Im c>0$ and with corresponding eigenfunctions of the form
\begin{equation}
\label{eq:eig}
\begin{aligned}
 %\hat \varphi_0&= - \frac{\hat \varphi_2(1)}{\hat \varphi_1(1)}  \hat \varphi_1 +  \hat \varphi_2, \\
\omega &= e^{in y}\partial_z^2 \hat \varphi_2.
\end{aligned}
\end{equation}
\end{lemma}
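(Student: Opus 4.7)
The plan is a direct substitution followed by an elementary ODE boundary value analysis. First, I plug the ansatz $\omega(t,y,z) = e^{in(y-ct)}\hat\omega(z)$ into the linearized equation \eqref{vort-lin}. Writing the associated stream function as $\varphi(t,y,z) = e^{in(y-ct)}\hat\varphi(z)$ with $\hat\omega = \partial_z^2 \hat\varphi$ and $\hat\varphi(\pm 1)=0$, and cancelling the common factor $in\,e^{in(y-ct)}$, one obtains precisely the Rayleigh problem \eqref{Rayleigh}. Thus growing modes of the form \eqref{grow-H} at wavenumber $n$ are in bijection with nontrivial Dirichlet solutions of \eqref{Rayleigh}, each furnishing an eigenvalue $\lambda = -inc$ of $\mathcal{L}$ whose real part is $n\,\mathrm{Im}\,c$.

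Second, I would verify by direct differentiation that the pair $\hat\varphi_1,\hat\varphi_2$ displayed in \eqref{hom-phi} solves the Rayleigh equation (for $\mathrm{Im}\,c \neq 0$ and $U$ real the denominator in $\hat\varphi_2$ never vanishes on $[-1,1]$) and has constant Wronskian equal to $1$, so that $\{\hat\varphi_1,\hat\varphi_2\}$ spans the two-dimensional solution space. Writing any solution as $\hat\varphi = a\hat\varphi_1 + b\hat\varphi_2$, the two Dirichlet conditions $\hat\varphi(\pm 1)=0$ produce a $2\times 2$ homogeneous linear system in $(a,b)$ whose determinant is exactly the Evans function $D(c)$ of \eqref{evans}. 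Nontrivial solutions therefore exist iff $D(c)=0$. Using $\hat\varphi_2(-1)=0$, $\hat\varphi_1(\pm 1)=U(\pm 1)-c$, and $\hat\varphi_2(1) = (U(1)-c)\int_{-1}^1 (U-c)^{-2}\,dz$, the determinant factors as
$$
D(c) = (U(-1)-c)\,(U(1)-c)\int_{-1}^1 \frac{dz}{(U(z)-c)^2},
$$
and because $\mathrm{Im}\,c \neq 0$ forces $c \neq U(\pm 1)$, the condition $D(c)=0$ is equivalent to the integral condition \eqref{un-condR}. When this holds, the boundary condition at $z=-1$ forces $a=0$, so $\hat\varphi = b\,\hat\varphi_2$ and $\hat\omega = b\,\partial_z^2 \hat\varphi_2$, which yields the eigenfunction formula \eqref{eq:eig} up to normalisation.

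The final step, which produces the unboundedness assertion, is the observation that \eqref{un-condR} is a scalar equation in $c$ alone and contains no reference to the wavenumber $n$. Hence a single root $c$ with $\mathrm{Im}\,c \neq 0$ immediately furnishes the whole family of eigenvalues $\lambda_n = -inc$ as $n$ ranges over integers with $n\,\mathrm{Im}\,c > 0$, with real parts $n\,\mathrm{Im}\,c \to +\infty$ as $|n|\to\infty$. I do not anticipate any analytical obstacle here; the only point requiring a bit of care is the boundary-value bookkeeping at $z=-1$, which collapses the generic $2\times 2$ Evans determinant down to the single integral condition \eqref{un-condR} and simultaneously forces the eigenfunction to be proportional to $\hat\varphi_2$ rather than to a genuine linear combination.
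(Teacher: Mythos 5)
Your proposal is correct and follows essentially the same route as the paper: substitute the ansatz \eqref{grow-H} to reduce to the Rayleigh problem \eqref{Rayleigh}, use the explicit homogeneous solutions \eqref{hom-phi} and the Evans determinant \eqref{evans} to characterize eigenvalues via \eqref{un-condR}, identify the eigenfunction as proportional to $\partial_z^2\hat\varphi_2$, and note that $c$ is independent of $n$ so the eigenvalues $\lambda=-inc$ give unbounded unstable spectrum. You merely spell out the $2\times 2$ boundary-condition bookkeeping (the factorization of $D(c)$ and the forcing of $a=0$) that the paper leaves implicit.
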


\subsection{Sharp semigroup bounds}

Let $L$ be the matrix operator defined as in \eqref{def-L-E}. From now on, we consider a shear flow $U(z)$ such that $ ||| U |||_{\delta'_1} < +\infty$ for some $\delta'_1>0$, with which the unstable spectrum of the linearization \eqref{vort-lin} is unbounded; see the previous section. We set $\gamma_0$ to be defined by 
\begin{equation}\label{def-g0} \gamma_0 : = \max_{\Im c \neq 0} \Big\{ \Im c ~:~ D(c) = 0\Big\}.\end{equation}
The above maximum exists and is positive, since by a view of~\eqref{un-condR} the Rayleigh problem has no solution as $|c|\to \infty$ and $D(c)$ is continuous (in fact, analytic) in  $\{\Im c\neq 0\}$. Let $c_0$ be the solution of $D(c_0) =0$ so that $\gamma_0 = \Im c_0$, and let $\omega$ be the corresponding eigenfunction as in \eqref{eq:eig}. By Lemma \ref{lem-unstable}, $\lambda_0 = -i n c_0$ is an unstable eigenvalue of $\mathcal{L}$, for all $n \in \mathbb{Z}$, so that $\Re \lambda_0 = n \Im c_0>0$. Since $\omega=e^{in y}\partial_z^2 \hat \varphi_2$, as defined in \eqref{hom-phi} , the regularity of $\omega$ follows from that of the given shear flow $U(z)$.

\begin{proposition}\label{prop-semigroup-cL-e} Let $\delta, \delta' >0$ and let $\gamma_0$ be defined as in \eqref{def-g0}. The semigroup $e^{Ls}$, associated to $L$, is well-defined in $X_{\delta, \delta'}$, for small $s>0$ and small $\delta'$. More precisely, for any $\gamma > \gamma_0$, there is a positive constant $C_\gamma$ so that 
$$ \| e^{Ls} h \|_{\delta - \gamma s, \delta'} \le C_\gamma  \| h\|_{\delta, \delta'},$$
for all $h \in X_{\delta, \delta'}$, $0<\delta' \ll  \gamma_0$, and for all $s$ so that $\delta - \gamma s> 0$.  
\end{proposition}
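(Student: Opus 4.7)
The strategy is to reduce to bounds on the scalar operator $\mathcal{L}$, then perform a Fourier decomposition in $y$, invert the reduced operator $\mathcal{L}_n$ at each mode $n\neq 0$ through the explicit Rayleigh Green's function for $\Re\lambda>\gamma_0|n|$, and finally reconstruct the semigroup by a Dunford contour integral. The block lower triangular structure of $L$ in \eqref{def-L-E} decouples the first two components---each evolving under $\mathcal{L}$ alone---from the third, which satisfies a transport equation $\partial_s W_3=-U\partial_y W_3+T(W_2)$ with $T(W):=-U'W+U''\partial_z\varphi(W)+U'''\varphi(W)$. Since $\varphi(\cdot)$ and $\partial_z\varphi(\cdot)$ are bounded on $X_{\delta'}$ (two elliptic integrations in $z$ on $[-1,1]$), and the free transport $e^{-sU\partial_y}$ (acting on Fourier mode $n$ as multiplication by $e^{-insU(z)}$) satisfies an analytic-loss bound of the same type as the one we seek, Duhamel's formula reduces the proposition to the analogous bound for $e^{\mathcal{L}s}$.

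Since $\mathcal{L}$ commutes with $\partial_y$, it is block-diagonalized by the $y$-Fourier transform, acting on each mode as $\mathcal{L}_n\omega_n=-inU\omega_n+inU''\varphi_n$ with $\partial_z^2\varphi_n=\omega_n$, $\varphi_n(\pm 1)=0$. The resolvent equation $(\lambda-\mathcal{L}_n)\omega_n=h_n$ becomes, through $\omega_n=\partial_z^2\varphi_n$ and the change of parameter $c=i\lambda/n$, the inhomogeneous Rayleigh problem
$$
(U-c)\partial_z^2\varphi_n-U''\varphi_n=\frac{h_n}{in},\qquad \varphi_n\bigl|_{z=\pm 1}=0,
$$
which is explicitly solved by variation of parameters using the pair $\hat\varphi_1,\hat\varphi_2$ from \eqref{hom-phi} (whose Wronskian is $1$), provided the Evans function $D(c)$ does not vanish. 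Fixing $\gamma'\in(\gamma_0,\gamma)$ and working on the vertical contour $\Gamma_n=\{\Re\lambda=\gamma'|n|\}$, every $c$ appearing there has $|\Im c|=\gamma'>\gamma_0$ and therefore lies, by the very definition \eqref{def-g0} of $\gamma_0$, at a positive distance from the zero set of $D$. Provided $\delta'$ is chosen small relative to $\gamma_0$, the analytic continuation of $U$ to the complex strip $|\Im z|\le\delta'$ (granted by $|||U|||_{\delta_1'}<\infty$) still satisfies $|U(z)-c|\ge\gamma'/2$, and the explicit Green's function built from $\hat\varphi_1,\hat\varphi_2$ yields a uniform resolvent bound of the form
$$
\|(\lambda-\mathcal{L}_n)^{-1}h_n\|_{\delta'}\le \frac{C}{|n|\bigl(1+|\Im\lambda|/|n|\bigr)}\,\|h_n\|_{\delta'},\qquad \lambda\in\Gamma_n,
$$
with decay in $|\Im\lambda|$ coming from the $(U-c)^{-1}$ factor as $|c|\to\infty$.

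For each $n\neq 0$, inserting this estimate into the Dunford formula
$$
e^{\mathcal{L}_n s}\omega_n=\frac{1}{2\pi i}\int_{\Gamma_n}e^{\lambda s}(\lambda-\mathcal{L}_n)^{-1}\omega_n\,d\lambda
$$
and combining $|e^{\lambda s}|=e^{\gamma'|n|s}$ with the integrability in $\Im\lambda$ gives $\|e^{\mathcal{L}_n s}\omega_n\|_{\delta'}\le C_{\gamma'}\,e^{\gamma'|n|s}\|\omega_n\|_{\delta'}$; the zero mode is trivial. Multiplying by $e^{(\delta-\gamma s)|n|}$ and summing over $n$, the factor $e^{(\gamma'-\gamma)|n|s}$ is uniformly bounded (and in fact absorbable) since $\gamma>\gamma'$, producing the claimed inequality. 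The main obstacle is the resolvent step: one must simultaneously control the explicit Rayleigh Green's function throughout the complex $z$-strip---which forces $\delta'$ to be chosen small compared with $\gamma_0$ so that the analytic continuation of $U(z)-c$ never approaches zero---and extract enough decay in $|\Im\lambda|$ from the large-$|c|$ asymptotics of $\hat\varphi_1,\hat\varphi_2$ to make the Dunford integral converge absolutely.
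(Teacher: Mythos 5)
Your overall architecture matches the paper's: use the triangular structure of $L$ to reduce to the scalar operator $\mathcal{L}$ plus a forced transport equation for the third component, Fourier-decompose in $y$, invert $\lambda-\mathcal{L}_n$ through the inhomogeneous Rayleigh problem built from $\hat\varphi_1,\hat\varphi_2$ for $|\Im c|>\gamma_0$, and reconstruct the semigroup by a contour integral. But there is a genuine gap at the key quantitative step: the resolvent bound you state, of size $C\,|n|^{-1}\bigl(1+|\Im\lambda|/|n|\bigr)^{-1}$ on the vertical contour, decays only like $1/|c|$ as $|\Im\lambda|\to\infty$, and $\int d(\Im\lambda)\,\bigl(1+|\Im\lambda|/|n|\bigr)^{-1}$ diverges. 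So the Dunford integral does \emph{not} converge absolutely under your hypothesis, and the estimate $\|e^{\mathcal{L}_n s}\omega_n\|_{\delta'}\le C e^{\gamma'|n|s}\|\omega_n\|_{\delta'}$ cannot be read off "integrability in $\Im\lambda$" as you assert; no amount of large-$|c|$ asymptotics of $\hat\varphi_1,\hat\varphi_2$ will produce better than $O(1/|c|)$ for the full resolvent, because the free-transport part $h/\bigl(i(U-c)\bigr)$ genuinely decays only at that rate. The paper circumvents exactly this: it writes $(\lambda-\mathcal{L}_1)^{-1}h=\frac{U''}{U-c}\hat\varphi+\frac{h}{i(U-c)}$, computes the contour integral of the second (non–absolutely convergent) piece exactly as a principal value, obtaining the bounded multiplier $e^{-iU(z)s}h$, and estimates only the first piece, which carries the extra elliptic gain $\|\hat\varphi\|_{H^2}\le C(1+|\Re c|)^{-1}\|h\|_{L^2}$ and hence an integrable $O\bigl((1+|\Re c|)^{-2}\bigr)$ integrand. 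Your argument needs this splitting (or an equivalent regularization, e.g.\ integration by parts in $\lambda$ for data in the domain) to close.

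A secondary, less critical divergence: you propose proving the resolvent bound directly in the analytic norm $\|\cdot\|_{\delta'}$ by continuing $U$ to a complex strip and controlling the Rayleigh Green's function there. The paper instead proves only an $L^2$-in-$z$ semigroup bound by Laplace inversion and then upgrades it to $X_{\delta'}$ through a hyperbolic energy estimate for the transport part (its Lemma \ref{lem-trans}) plus Gronwall, paying the smallness condition $\delta'\bigl(|||U|||_{\delta'}+\delta'\|U''\|_{\delta'}\bigr)\le\gamma_0<\gamma$; the same device handles the $W_3$ component, where your Duhamel reduction is essentially the paper's argument in integral form. Your complex-strip route is plausible but requires uniform control of $\hat\varphi_2=(U-c)\int_{-1}^z(U-c)^{-2}$ off the real axis and a precise link between the strip width and the norm $\|\cdot\|_{\delta'}$ (which is defined through weighted $z$-derivatives, not through an extension), details you would need to supply; the paper's two-step $L^2$-then-analytic bootstrap avoids them entirely.
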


We start the proof of Proposition~\ref{prop-semigroup-cL-e} by proving the same bounds for the semi-group $e^{\mathcal L s}$. In Fourier variables, we first study the semigroup $\omega = e^{\mathcal{L}_n s} h$, solving 
$$\partial_s \omega  - \mathcal{L}_n \omega  =0, \qquad \omega_{\vert_{s=0}} = h,$$
with $$\mathcal{L}_n \omega : = - in U \omega  + in \varphi U'' , \qquad \partial_z^2 \varphi = \omega, \qquad \varphi_{\vert_{z=\pm 1}} = 0.$$

\begin{lemma}\label{lem-trans} For each $n \in \mathbb{Z}$, let $\omega$ solve the transport equation $(\partial_s + inU)\omega  = g$. Then, 
$$ \| \omega(s)\|_{\delta'} \le e^{\delta' |n| ||| U |||_{\delta'} s}  \| \omega(0)\|_{\delta'} + \int_0^s e^{\delta' |n| ||| U |||_{\delta'} (s-\tau)} \| g(\tau)\|_{\delta'}\; d\tau,$$
for all $\delta' \in (0, \delta'_1]$. 
\end{lemma}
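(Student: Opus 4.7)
The plan is to reduce the inhomogeneous problem to a homogeneous semigroup estimate via Duhamel's formula, then prove the homogeneous bound by a Grönwall-type argument on the analytic norm in $z$. Since the transport operator $\partial_s + inU$ has no $z$-derivatives, the solution is given explicitly by
\begin{equation*}
\omega(s) = e^{-inUs}\omega(0) + \int_0^s e^{-inU(s-\tau)} g(\tau)\, d\tau,
\end{equation*}
so by the triangle inequality in $\|\cdot\|_{\delta'}$ the lemma reduces to the semigroup bound $\|e^{-inUs} h\|_{\delta'} \leq e^{\delta'|n|\,|||U|||_{\delta'} s}\|h\|_{\delta'}$.

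To prove this, set $\omega(s) = e^{-inUs}h$ and differentiate in $z$: each $\partial_z^k \omega$ solves
\begin{equation*}
\partial_s \partial_z^k \omega + inU\,\partial_z^k\omega = -in\sum_{j=1}^k \binom{k}{j}\,\partial_z^j U\; \partial_z^{k-j}\omega.
\end{equation*}
Pairing with $\partial_z^k\omega$ in $L^2([-1,1])$ and taking real parts, the transport term drops out because $U$ is real-valued, so, with $a_k(s) := \|\partial_z^k \omega(s)\|_{L^2}$, Cauchy--Schwarz gives the scalar inequality
\begin{equation*}
\tfrac{d}{ds}a_k(s) \leq |n| \sum_{j=1}^k \binom{k}{j} \|\partial_z^j U\|_{L^\infty}\, a_{k-j}(s).
\end{equation*}
(At points where $a_k$ vanishes, the usual regularization by $\sqrt{a_k^2+\varepsilon}$ and passing $\varepsilon\to 0$ handles the lack of pointwise differentiability.)

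Weighing this inequality by $|\delta'|^k/k!$, summing $k$ from $0$ to $N$, and rearranging the double sum by $\ell=k-j$ yields
\begin{equation*}
\tfrac{d}{ds}\sum_{k=0}^N a_k(s)\tfrac{|\delta'|^k}{k!} \leq |n|\left(\sum_{j\geq 1}\|\partial_z^j U\|_{L^\infty}\tfrac{|\delta'|^j}{j!}\right)\sum_{\ell=0}^N a_\ell(s)\tfrac{|\delta'|^\ell}{\ell!}.
\end{equation*}
The bracketed factor is bounded by $\delta'\,|||U|||_{\delta'}$ through the elementary shift identity $\sum_{j\geq 1}\|\partial_z^j U\|_{L^\infty}\frac{|\delta'|^j}{j!} = \sum_{\ell\geq 0}\|\partial_z^\ell U'\|_{L^\infty}\frac{|\delta'|^{\ell+1}}{(\ell+1)!} \leq \delta'\,|||U|||_{\delta'}$. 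Grönwall's inequality on the partial sum and then $N\to\infty$ yields the homogeneous bound, and Duhamel recovers the stated estimate.

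The main technical obstacle is the joint handling of the pointwise-in-$s$ differentiation of $a_k(s)$ and the interchange of sum and derivative, but this is standard because the system is \emph{triangular}: the equation for $\partial_z^k\omega$ involves only $\partial_z^{k-j}\omega$ with $j\geq 1$, so the estimates close without circularity. The key structural input is that $U$ is real, which kills the transport term in the $L^2$ energy identity; this is exactly what makes the loss of analytic regularity in $z$ only linear in $\delta' s$.
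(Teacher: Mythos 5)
Your proposal is correct and follows essentially the same route as the paper: the core estimate is the identical $L^2$ energy argument on $\partial_z^k\omega$ (the transport term killed by taking real parts since $U$ is real), followed by the same re-summation of the Leibniz binomial against the weights $|\delta'|^k/k!$ and the shift identity giving the factor $\delta'|n|\,|||U|||_{\delta'}$. The only difference is organizational—you apply Duhamel first and prove the homogeneous semigroup bound, whereas the paper runs Gr\"onwall directly on the inhomogeneous equation with the source $\|g\|_{\delta'}$ included—which yields the same estimate.
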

\begin{proof} The proof follows by $L^2$ energy estimates. Indeed, differentiating the equation for $\omega$, we have 
$$ \partial_s \partial_z^k \omega = - i n \sum_{j=0}^k \frac{k!}{j! (k-j)!} \partial_z^{k-j}U \partial_z^j \omega  +  \partial_z^k g.$$ 
Now taking the $L^2$ product against $\partial_z^k \omega$ and taking the real part, we get  
$$\frac12 \frac{d}{ds} \|\partial_z^k \omega\|^2_{L^2(-1,1)} \le \Big[ |n|  \sum_{j=0}^{k-1} \frac{k!}{j! (k-j)!} \| \partial_z^{k-j}U \partial_z^j \omega\|_{L^2(-1,1)}  + \|\partial_z^k g\|_{L^2(-1,1)} \Big] \|\partial_z^k \omega\|_{L^2(-1,1)},$$  
upon noting that the real part of $i \langle U \partial_z^k \omega, \partial_z^k \omega\rangle_{L^2(-1,1)}$ is equal to zero. This yields 
$$\frac{d}{ds} \|\partial_z^k \omega\|_{L^2(-1,1)} \le |n|  \sum_{j=0}^{k-1} \frac{k!}{j! (k-j)!} \| \partial_z^{k-j}U\|_{L^\infty(-1,1)} \|\partial_z^j \omega\|_{L^2(-1,1)}  + \|\partial_z^k g\|_{L^2(-1,1)}.$$  
By definition of the analytic norm, we get from the above estimate
\begin{equation}\label{a-transport}\begin{aligned}
\frac{d}{ds} \| \omega\|_{\delta'}
&\le |n|\sum_{k\ge 0}\frac{|\delta'|^k}{k!}\sum_{j=0}^{k-1} \frac{k!}{j! (k-j)!} \| \partial_z^{k-j}U\|_{L^\infty(-1,1)} \|\partial_z^j \omega\|_{L^2(-1,1)}  +  \| g\|_{\delta'}
\\&\le \delta'|n|\sum_{k\ge 0}\sum_{j=0}^{k-1} \frac{|\delta'|^j}{j! }\frac{|\delta'|^{k-j-1}}{(k-j-1)! } \| \partial_z^{k-j-1}U'\|_{L^\infty(-1,1)}  \|\partial_z^j \omega\|_{L^2(-1,1)}  + 
 \| g\|_{\delta'}
\\&\le \delta'|n|\sum_{\ell \ge 0}\sum_{j\ge 0} \frac{|\delta'|^j}{j! }\frac{|\delta'|^{\ell}}{\ell ! } \| \partial_z^{\ell}U'\|_{L^\infty(-1,1)}  \|\partial_z^j \omega\|_{L^2(-1,1)}  + 
 \| g\|_{\delta'}
 \\& \le  \delta' |n| \, |||U'|||_{\delta'} \| \omega\|_{\delta'} + \| g\|_{\delta'}.
 \end{aligned}\end{equation}
The lemma follows from the Gronwall inequality.  
\end{proof}

\begin{lemma}\label{lem-semigroupLn-e} Let $\gamma_0$ be defined as in \eqref{def-g0}. For each $n \in \mathbb{Z}$, the operator $\mathcal{L}_n$ generates a continuous semigroup $e^{\mathcal{L}_n s}$  from $X_{\delta'}$ to itself, for small $ \delta'>0$.  In addition, for any $\gamma>\gamma_0$, there is a positive constant $C_\gamma$ so that 
$$ \| e^{\mathcal{L}_n s} h \|_{\delta'} \le C_\gamma e^{\gamma|n|s} \| h\|_{\delta'},\qquad \forall~s\ge 0,$$
for all $h \in X_{\delta'}$, and $0<\delta'\ll \gamma_0$. 
\end{lemma}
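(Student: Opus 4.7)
I would establish this via a resolvent/Laplace-transform argument, exploiting the explicit solvability of the resolvent equation through the Rayleigh problem that already underlies Lemma~\ref{lem-unstable}. Energy estimates alone (as in Lemma~\ref{lem-trans}) will only give an exponential rate proportional to $|||U|||$ times $|n|$, unrelated to the sharp spectral rate $\gamma_0$, so the fine bound must come from spectral information.

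The plan is the following. First, given $h\in X_{\delta'}$ and $\lambda$ with $\Re\lambda > 0$, I solve the resolvent equation $(\lambda-\mathcal L_n)\omega=h$. Writing $\omega=\partial_z^2\varphi$ with $\varphi(\pm 1)=0$ and setting $c:=i\lambda/n$, a direct computation reduces the problem to the inhomogeneous Rayleigh equation
\begin{equation*}
(U-c)\partial_z^2\varphi - U''\varphi = \frac{h}{in},\qquad \varphi(\pm 1)=0.
\end{equation*}
Using the homogeneous solutions $\hat\varphi_1,\hat\varphi_2$ from \eqref{hom-phi}, whose Wronskian equals $1$, variation of parameters produces an explicit Green's function with the Evans function $D(c)$ from \eqref{evans} in the denominator. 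By the very definition \eqref{def-g0} of $\gamma_0$, $D(c)$ does not vanish on $\{\mathrm{sgn}(n)\Im c\ge \gamma\}$ for any $\gamma>\gamma_0$, and combined with the decay of $D(c)$ as $|c|\to\infty$ coming from \eqref{un-condR} one obtains a uniform lower bound $|D(c)|\ge c_\gamma>0$ on this set.

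Next, because $U$ is real-analytic with $|||U|||_{\delta'_1}<+\infty$, the functions $\hat\varphi_1,\hat\varphi_2$ extend holomorphically in $z$ to a complex strip on which $|U(z)-c|$ stays bounded away from zero as long as the strip width is small compared to $|\Im c|\gtrsim \gamma_0$. Tracking $z$-derivatives in the Green's function (imitating the bookkeeping of Lemma~\ref{lem-trans}) then yields an analytic-norm resolvent bound
\begin{equation*}
\|(\lambda-\mathcal L_n)^{-1}h\|_{\delta'}\le \frac{C_\gamma}{|n|}\|h\|_{\delta'},\qquad \Re\lambda\ge \gamma|n|,
\end{equation*}
for $0<\delta'\ll \gamma_0$, together with enough decay in $|\Im\lambda|$ to close a Dunford contour. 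Finally, representing the semigroup as
\begin{equation*}
e^{\mathcal L_n s}h = \frac{1}{2\pi i}\int_\Gamma e^{\lambda s}(\lambda-\mathcal L_n)^{-1}h\,d\lambda
\end{equation*}
on the vertical contour $\Gamma=\{\Re\lambda=\gamma|n|\}$ (with standard truncation/correction to recover compactness of $\Gamma$), factoring out $e^{\gamma|n|s}$ and invoking the resolvent bound delivers the claimed inequality.

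The hard part is obtaining the resolvent estimate in the \emph{analytic} norm $X_{\delta'}$ rather than in $L^2$: each $z$-derivative of the Green's kernel involves inverse powers of $U(z)-c$, so holomorphic extension of $z$ must stay strictly within the strip on which $U-c$ is bounded away from zero. This forces the precise quantitative relationship $\delta'\ll \gamma_0$ appearing in the statement, and is also what prevents the naive Gronwall/energy method from yielding the sharp constant $\gamma_0$. A secondary technical point is extracting enough $|\Im\lambda|$-decay from the resolvent to make the Dunford integral absolutely convergent, which may necessitate a preliminary splitting of $h$ according to frequency in $z$, or an integration by parts in $\lambda$ inside the contour integral.
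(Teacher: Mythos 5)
Your overall architecture coincides with the paper's for the core spectral input: the paper also inverts $(\lambda-\mathcal{L}_n)$ by rescaling to $n=\pm1$, reducing to the inhomogeneous Rayleigh problem solved through $\hat\varphi_1,\hat\varphi_2$, using the non-vanishing of the Evans function $D(c)$ for $|\Im c|>\gamma_0$, and representing $e^{\mathcal{L}_1 s}$ by the inverse Laplace transform. Where you genuinely diverge is the treatment of analytic regularity, and here the paper's route is lighter than yours: it never proves a resolvent bound in $X_{\delta'}$. It establishes only the sharp $L^2$ bound $\|e^{\mathcal{L}_1 s}h\|_{L^2}\le C_\gamma e^{\gamma s}\|h\|_{L^2}$ from the contour integral (handling the mere $O(1/|\Re c|)$ decay not by frequency splitting or integration by parts in $\lambda$, but by explicitly peeling off the multiplication part $\frac{h}{i(U-c)}$, whose inverse Laplace transform is exactly $e^{-iU(z)s}h$, leaving an absolutely convergent integrand $\frac{U''}{U-c}\hat\varphi$), and then upgrades to $\|\cdot\|_{\delta'}$ by the Gronwall/energy argument of Lemma~\ref{lem-trans} applied to $(\partial_s+iU)\omega=iU''\varphi$: the transport term costs only $\delta'\,|||U|||_{\delta'}$ per unit time, which for $\delta'\ll\gamma_0$ sits \emph{below} $\gamma_0<\gamma$, while the nonlocal source $U''\varphi$ is controlled by $\|\omega\|_{L^2}$ via elliptic regularity and Poincar\'e, so the sharp rate is imported from the $L^2$ bound through Duhamel. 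This is why your opening claim that energy estimates are ``unrelated to the sharp spectral rate'' is slightly off: they are exactly what closes the argument once $\delta'$ is small, and this is also the true origin of the condition $\delta'\ll\gamma_0$ in the statement (rather than the width of a holomorphy strip, as you suggest). Your alternative—an analytic-norm resolvent bound by holomorphically extending the Green's kernel in $z$—is plausible in principle for $\delta'$ small (since $|\Im(U(z)-c)|\ge\gamma-C\delta'$ on a thin strip), but it requires heavy uniform-in-$\Re c$ bookkeeping of all $z$-derivatives of $1/(U-c)$, $\hat\varphi_1$, $\hat\varphi_2$ and $1/D(c)$, and your proposed fixes for the non-absolutely-convergent Dunford integral are the weakest point: ``splitting $h$ by frequency in $z$'' has no clear meaning on $[-1,1]$ with boundary conditions, and integration by parts in $\lambda$ produces a $1/s$ factor that degrades the bound as $s\to0$, whereas the statement is needed for all $s\ge0$; the clean remedy is the paper's explicit subtraction of the pure-transport semigroup, which you would in any case need to estimate in $X_{\delta'}$ by the very same $\delta'|n|\,|||U|||_{\delta'}\le\gamma_0|n|$ computation. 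So your route can be made to work, but at the cost of substantially more technical effort than the paper's hybrid $L^2$-resolvent-plus-Gronwall argument.
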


\begin{proof}

By the time rescaling $s \mapsto s |n|$, it suffices to study the semigroup for $n=\pm 1$.  Let us focus on  $n=1$, the case $n=-1$ being identical.  We obtain the sharp bound via the inverse Laplace transform for the semigroup  (\cite{Pazy} or \cite[Appendix A]{Zum}): 
$$ e^{\mathcal{L}_1 s} h = \mathrm{PV} \,  \frac{1}{2\pi i} \int_{\gamma-i\infty}^{\gamma+ i \infty} e^{\lambda s} (\lambda - \mathcal{L}_1)^{-1}h \; d\lambda,$$
for sufficiently large $\gamma$, where $\mathrm{PV}$ denotes the Cauchy principal value.
Set $\hat \omega : = (\lambda - \mathcal{L}_1)^{-1}h $. We shall solve the resolvent equation for all $\lambda = -ic$ with sufficiently large values of $|\Im c|$. It follows that 
$$ \hat \omega = \partial_z^2 \hat \varphi, \qquad  (U-c)\partial_z^2\hat  \varphi - U'' \hat \varphi = \frac{h}{i}, \qquad \hat \varphi_{\vert_{z=\pm 1}} = 0.$$
This is a nonhomogenous Rayleigh problem with unknown $\hat \varphi$. By definition  of $\gamma_0$, see \eqref{def-g0}, for all $c$ such that $|\Im c| > \gamma_0$, the Rayleigh operator $(U-c)\partial_z^2 - U'' $ is invertible, and so the Rayleigh problem has an unique solution $\hat \varphi$ (in fact, one can derive an explicit representation involving the homogenous solutions $\hat \varphi_1, \hat \varphi_2$ defined as in \eqref{hom-phi}, see e.g. \cite{GGN}). In addition, together with zero boundary conditions, there holds 
$$ \| \hat \varphi\|_{H^2} \le \frac{C}{1+|\Re c|} \| h\|_{L^2} , \qquad \forall~c\in \mathbb{C}~:~|\Im c| >\gamma_0.$$
Recalling that $\hat \omega = (\lambda - \mathcal{L}_1)^{-1} h$ is the resolvent solution with $\hat \omega = \partial_z^2 \hat \varphi$, we then get 
$$
\begin{aligned}
 e^{\mathcal{L}_1 s} h 
 &= \mathrm{PV} \,  \frac{1}{2\pi i} \int_{\gamma-i\infty}^{\gamma+ i \infty} e^{\lambda s} (\lambda - \mathcal{L}_1)^{-1}h \; d\lambda,
 \\
 &= \mathrm{PV} \,  \frac{1}{2\pi } \int_{i\gamma-\infty}^{i\gamma+  \infty} e^{-i cs} \Big[ \frac{U''}{U - c} \hat \varphi + \frac{h}{i(U-c)} \Big] \; dc
 \\
 &= \mathrm{PV} \,  \frac{1}{2\pi } \int_{i\gamma-\infty}^{i\gamma+  \infty} e^{-i cs} \frac{U''}{U - c} \hat \varphi  \; dc + e^{-i U(z) s} h.
\end{aligned} $$
This identity, together with the elliptic bound on $\hat \varphi$, and the fact that we can take any $c$ such that $\Im c=\gamma> \gamma_0$ yield%$\gamma$ to be arbitrary as long as $\gamma>\gamma_0$ yield 
$$ 
\begin{aligned}\| e^{\mathcal{L}_1 s} h\|_{L^2} &\le   \frac{C}{2\pi } \int_\mathbb{R} e^{\gamma s} \frac{\|U''\|_{L^\infty}}{ \sqrt{|U-\Re c|^2 + \gamma^2}} \|\hat \varphi \|_{L^2}  \; d (\Re c) + \| h\|_{L^2}  \\
 &\le   \frac{C}{2\pi } \int_\mathbb{R} e^{\gamma s} \frac{\|U''\|_{L^\infty}}{ \sqrt{|U-\Re c|^2 + \gamma^2}} \frac{1}{1+|\Re c|} \|h \|_{L^2}  \; d (\Re c) + \| h\|_{L^2}  \\
&\le C_\gamma e^{\gamma s} \| h\|_{L^2},
\end{aligned}$$
for all $\gamma>\gamma_0$.

Next, we derive analytic estimates for $\omega := e^{\mathcal{L}_1 s} h$, solving 
$$ (\partial_s + iU)\omega  - iU''\varphi = 0, \qquad \partial_z^2 \varphi = \omega,$$  
with zero boundary conditions on $\varphi$. 
%For convenience, we also introduce 
%\begin{equation}\label{vort-normz}
% \| \omega\|_{\delta'}: = \sum_{k\ge 0} \|\partial_z^k \omega \|_{L^2([-1,1])} \frac{|\delta'|^{k} }{k!} 
% \end{equation}
%for any $\delta'>0$. We note that $\| fg\|_{\delta'} \le C\| f\|_{\delta'} \| g\|_{\delta'}$. 
Using \eqref{a-transport}, we have 
$$\begin{aligned}
\frac{d}{ds} \| \omega\|_{\delta'} \le \delta'  ||| U |||_{\delta'} \| \omega\|_{\delta'}  + \| U'' \varphi\|_{\delta'}.
 \end{aligned}$$
By definition, $\| U'' \varphi \|_{\delta'}  \leq  \| U''  \|_{\delta'} \| \varphi \|_{\delta'}$ and
$\| \varphi\|_{\delta'}\le C_0 \| \varphi\|_{H^1(-1,1)} + |\delta'|^2\| \partial_z^2 \varphi\|_{\delta'}$. We thus obtain
$$\begin{aligned}
\frac{d}{ds} \| \omega\|_{\delta'} 
&\le C_0\| \varphi\|_{H^1(-1,1)} + \delta' \Big( ||| U |||_{\delta'} + \delta' \| U''\|_{\delta'}\Big) \| \omega\|_{\delta'}
\\
&\le C_0 \|  \omega  \|_{L^2} + \delta' \Big( ||| U |||_{\delta'} + \delta' \| U''\|_{\delta'}\Big) \| \omega\|_{\delta'} ,
 \end{aligned}$$
in which the last estimate is due to the Poincar\'e inequality. 
Hence, for $\delta'$ sufficiently small so that $\delta' (|||U'|||_{\delta'} + \delta' \| U''\|_{\delta'} ) \le \gamma_0<\gamma $, the Gronwall inequality  and the previous $L^2$ bound give 
$$ 
\begin{aligned}
\| \omega(s)\|_{\delta'} &\le e^{\delta'( ||| U |||_{\delta'} + \delta' \| U''\|_{\delta'}) s} \| h\|_{\delta'}+ C_0 \int_0^s e^{\delta'( ||| U |||_{\delta'} + \delta' \| U''\|_{\delta'}) (s-\tau)} \| \omega(\tau)\|_{L^2} \; ds 
\\
&\le e^{\gamma s} \| h\|_{\delta'}+ C_\gamma e^{\gamma s} \| h\|_{L^2},
\end{aligned}$$  
which proves the claimed bound for $e^{\mathcal{L}_1 s}$. 

\end{proof}

We are now in position to prove Proposition~\ref{prop-semigroup-cL-e}.

\begin{proof}[Proof of Proposition~\ref{prop-semigroup-cL-e}]
Let us prove the bound for the semigroup $(W_1,W_2,W_3) = e^{Ls}H$, with $L$ being the matrix operator defined as in \eqref{def-L-E}. Let $W_{j,n}(z) = \langle W_j(\cdot, z), e^{in y}\rangle_{L^2(\mathbb{T})}$, for $j = 1,2,3$. By the structure of the matrix operator $L$, Lemma~\ref{lem-semigroupLn-e} gives the bound for $W_{1,n}, W_{2,n}$:   
\begin{equation}\label{W2-bd} \| W_{j,n}  \|_{\delta'} \le C_\gamma e^{\gamma  |n|s} \| H_{j,n}\|_{\delta'},\qquad j=1,2,\end{equation}
for all $\gamma >\gamma_0$, $\delta' \le \gamma_0$, and all $n \in \mathbb{Z}$. Hence, by definition of the analytic norm, we get 
$$ \| W_j (s)\|_{\delta - \gamma s, \delta'} \le  C_\gamma \sum_{n\in \mathbb{Z}} e^{\gamma |n|s} \| H_{j,n}\|_{\delta'}  e^{(\delta - \gamma s) |n| } \le C_\gamma \| H_j\|_{\delta , \delta'}.$$
for any $s$ so that $\delta - \gamma s >0$. 

As for $W_3$, we write 
$$( \partial_s + U \partial_y )W_3  + U' W_2 - U'' \partial_z \varphi(W_2) - U''' \varphi(W_2)  = 0.$$ 
Again, in Fourier variables, we have
$$( \partial_s + i n U )W_{3,n}  + U' W_{2,n} - U'' \partial_z \varphi(W_{2,n}) - U''' \varphi(W_{2,n})  = 0.$$ 
Again, using \eqref{a-transport}, we get 
$$\begin{aligned}
\frac{d}{ds} \|W_{3,n}\|_{\delta'} 
&\le  \delta' |n| ||| U'|||_{\delta'} \| W_{3,n}\|_{\delta'} +  \| U' W_{2,n} - U'' \partial_z \varphi(W_{2,n}) - U''' \varphi(W_{2,n}) \|_{\delta'}
 \\& \le \delta' |n| ||| U |||_{\delta'} \| W_{3,n}\|_{\delta'}+C_0 \| W_{2,n}\|_{\delta'} .
 \end{aligned}$$
Using the Gronwall inequality, the bound \eqref{W2-bd} on $W_{2,n}$, and the assumption that $\delta'  ||| U |||_{\delta'}  \le \gamma_0$, we obtain 
$$ \| W_{3,n}(s)\|_{\delta'} \le C_\gamma e^{\gamma |n| s} \| H_n\|_{\delta'}.$$
Summing  the estimate over $n \in \mathbb{Z}$ yields the claimed bound for $W_3$. This completes the proof of the semigroup estimate and thus of the proposition. 
\end{proof}

\subsection{Conclusion}

We now show that the system \eqref{pert-vortM} fits our abstract framework. 
%
%First, we recall the analytic norm:
%\begin{equation*}%\label{vort-norm}
% \| \omega\|_{\delta, \delta'}: =\sum_{n\in \mathbb{Z}} \sum_{k\ge 0} \|\partial_z^k \omega_n \|_{L^2([-1,1])} \frac{|\delta'|^{k }}{ k!} e^{\delta |n| } ,\end{equation*}
%for any $\delta, \delta' >0$, in which $\omega_n = \langle \omega, e^{in y} \rangle_{L^2(\mathbb{T})}$. 
For {\bf (H.1)}, we note that if $\omega$ is an eigenfunction for $\mathcal{L}$ with an eigenvalue $\lambda$, that is $\mathcal{L} \omega = \lambda \omega$, then 
$W= \begin{pmatrix} \omega \\ \partial_y \omega \\ \partial_z \omega \end{pmatrix} $ is an eigenfunction for $L$ with the same eigenvalue $\lambda$. In addition, the growing mode must be of the form given by Lemma~\ref{lem-unstable}. %$e^{i\alpha (y-cs) }\hat \omega(z)$ with $\hat \omega = \partial_z^2 \hat \varphi$, where $\hat \varphi$ solves the Rayleigh problem \eqref{Rayleigh}. 
The regularity of $W$ follows from that of the given shear flow $U(z)$, which is real analytic; see \eqref{U-analytic}. %Moreover, as a consequence of the real-analyticity, $W$ is not identically $0$ in an open set. 
%The assumption {\bf (H.2)} follows. 
%By Lemma \ref{lem-unstable}, $\lambda_0 = -i n c_0$ is an unstable eigenvalue of $L$, for all $n \in \mathbb{Z}$ with $n \Im c_0>0$. 
The definition of $\gamma_0$ in~\eqref{def-g0}, Lemma \ref{lem-unstable}, and Proposition~\ref{prop-semigroup-cL-e} finally prove that {\bf (H.2)} holds.

Assumption {\bf (H.3)} follows directly from the definition of $L$, whereas {\bf (H.4)} and {\bf (H.5)} are clear, thanks to the structure of the quadratic nonlinearity $Q(W,W)$ and the fact that there are no $R_1, R_2$ generated from the system \eqref{pert-vortM}.

\section{Ill-posedness of the kinetic incompressible Euler equations}
\label{s-ill}

In this section, we give the proof of Theorem \ref{t-ill}, establishing ill-posedness for the kinetic incompressible Euler equations~\eqref{Vlasov}-\eqref{Euler}, which we recall below for convenience:%study the quasineutral limit for Vlasov-Poisson system: 
\begin{equation}\label{nVM1} 
\partial_t g + v \cdot \nabla_x g - \nabla_x \Phi \cdot \nabla_v g  =0
\end{equation}
with \begin{equation} \label{ellip12}
\begin{aligned}
  -\Delta_x \Phi = \nabla_x \cdot\Big (\nabla_x \cdot \Big( \int v \otimes v g \, dv\Big)\Big), \qquad \int_{\mathbb{T}^3}\Phi \; dx  =0.
 \end{aligned}\end{equation}
We shall prove in this section how Theorem~\ref{t-ill} follows from the abstract Theorem~\ref{thm-abstract}.
 We work with the analytic function space $X_{\delta, \delta'}$, equipped with the following norm: 
\begin{equation}\label{def-norm}
 \| f\|_{\delta, \delta'}: =\sum_{n\in \mathbb{Z}^3} \sum_{|\alpha|\ge 0} \| \langle v\rangle^m  \partial_v^\alpha f_n \|_{L^2( \mathbb{R}^3\times \mathbb{R}^3)} \frac{|\delta'|^{|\alpha|} }{ |\alpha|!} e^{\delta |n| } ,\end{equation}
for any $\delta, \delta' >0$, in which $f_n = \langle f, e^{in \cdot y} \rangle_{L^2(\mathbb{T}^3)}$. Here, $m$ is a fixed number, $m\ge 4$. We also introduce the $v$-analytic function space $X_{\delta'}$, equipped with the norm
\begin{equation}
 \| f\|_{\delta'}: = \sum_{|\alpha|\ge 0} \| \langle v\rangle^m \partial_v^\alpha f \|_{L^2(\mathbb{R}^3)} \frac{|\delta'|^{|\alpha|}}{|\alpha| !} 
 \end{equation}
for any $\delta'>0$.  
 %Here, $\tilde j = \int v g\; dv$, $\tilde m = \int v \otimes v g \, dv$, and $ \nabla \cdot(\nabla \cdot \tilde m) = \sum_{j,\ell} \partial_{x_\ell} \partial_{x_j} \tilde m_{j\ell}$. 
 We study ill-posedness near radial homogeneous equilibria of the form $g = \mu(v) \equiv \mu(|v|^2)$ and $\Phi= 0$, for real analytic functions $\mu$ satisfying 
 $ \int \mu(v) \; dv = 1$ and $\| \mu \|_{\delta'} <+\infty$ for some $\delta'>0$.
%In what follows, we shall consider the radial homogenous equilibria of the form: $\mu = \mu(|v|)$. In addition, we 

%We shall show that the system \eqref{nVM1} is ill-posed for initial data near \emph{unstable} equilibria $\mu(|v|)$. To do so,
We write the perturbed solution in the fast variables as follows: 
$$
g (t,x,v)= \mu(v) + f(s,y,v), \qquad \Phi (t,x)= \varphi (s,y), \qquad s =t /\varepsilon, \qquad  y = x /\varepsilon.
$$
The new pair $(f,\varphi)$ then solves 
 \begin{equation}\label{nVM4} 
\left \{ 
\begin{aligned}
\partial_s f &+ v \cdot \nabla_y f - \nabla_y \varphi \cdot \nabla_v \mu - \nabla_y \varphi \cdot \nabla_v f 
=0,
\\
  -\Delta_y \varphi &=\nabla_y \cdot\Big (\nabla_y \cdot \Big( \int v \otimes v f \, dv\Big)\Big), \qquad \int_{\mathbb{T}^3}\varphi \; dy  =0.
\end{aligned}\right.
\end{equation}
We shall show that the problem \eqref{nVM4} is ill-posed due to the unbounded unstable spectrum of the linearized operator
 $$
 \mathcal{L} f :=  - v \cdot \nabla_y f + \nabla_y \varphi \cdot \nabla_v \mu, \qquad   -\Delta_y \varphi = \nabla_y \cdot\Big (\nabla_y \cdot \Big( \int v \otimes v f \, dv\Big)\Big),\qquad  \int_{\mathbb{T}^3}\varphi \; dy  =0,
 $$
which we shall study in details in the next section. Next, since the nonlinearity $\nabla_y \varphi \cdot \nabla_v f$ is quadratic with respect to the partial derivatives of $f$, we are led to write the problem \eqref{nVM4} in the matrix form 
for the vector: 
\begin{equation}F: = \begin{pmatrix} f \\ \nabla_y f \\ \nabla_v f \end{pmatrix}.  \end{equation} 
We write $F = (F_1, F_2, F_3)\in \mathbb{R}\times \mathbb{R}^3 \times  \mathbb{R}^3$. We introduce the matrix operator
\begin{equation}\label{def-cL}
L: = \begin{pmatrix} \mathcal{L} & 0 & 0 \\ 0 &   \mathcal{L}_3 & 0 \\  0 & \mathcal{M} & \mathcal{T} \end{pmatrix},
\end{equation}
in which $$\mathcal{L}_3  = \begin{pmatrix} \mathcal{L} & 0 & 0 \\ 0&\mathcal{L}&0 \\ 0&0&\mathcal{L}\end{pmatrix}, \qquad \mathcal{T} = - v \cdot \nabla_y,\qquad 
\mathcal{M} G :=  - G + \nabla_v ( \varphi(G)\cdot \nabla_v \mu).
$$
Here, the vector $\varphi(G) = (\varphi(G_1), \varphi(G_2), \varphi(G_3))$ is understood as the unique solution to the elliptic problem
$$ -\Delta_y \varphi(G_k)  = \nabla_y \cdot \left(\nabla_y \cdot \left(\int v \otimes v G_k(v) \, dv\right)\right),$$ 
with zero average over $\mathbb{T}^3$, for each vector field $G(v)$.

It follows that $f$ solves \eqref{nVM4} if and only if the vector field $F = (F_1, F_2, F_3)$ solves 
\begin{equation}\label{eqs-F}
\partial_s F -  L F = Q(F,F) ,
\end{equation}
in which direct calculations show 
$$
Q(F,F) =  \begin{pmatrix} \varphi(F_2) \cdot F_3 \\  \nabla_y (\varphi(F_2) \cdot F_3) \\ \nabla_v (\varphi(F_2) \cdot F_3)  \end{pmatrix} .
$$
We shall show the ill-posedness of \eqref{eqs-F} by directly checking the assumptions {\bf (H.1)}--{\bf (H.5)} made in our abstract ill-posedness framework. 

 \subsection{Unbounded unstable spectrum  of the linearized operator}%{Unbounded unstable spectrum}
In this section, we study the linearized problem:
\begin{equation}\label{nVP-lin} 
\partial_s f + v \cdot \nabla_y f - \nabla_y \varphi \cdot \nabla_v \mu  =0, \qquad  -\Delta_y \varphi = \nabla_y \cdot \Big (\nabla_y \cdot \Big( \int v \otimes v f \, dv\Big)\Big).
\end{equation}
We search for a possible growing mode of the form:  
\begin{equation}\label{def-grmode-nVP}
(f,\varphi) = (e^{in\cdot (y-\omega t)}\hat f(v),  e^{in\cdot (y-\omega t)}\hat \varphi)\end{equation} for some complex constant $\hat \varphi$
 and complex function $\hat f(v)$, with $\Im (n \cdot \omega) >0$,  for some complex vector $\omega$. Plugging the above ansatz into the Vlasov equation in \eqref{nVP-lin}, we get 
 $$ in \cdot ( v - \omega ) \hat f  -in  \hat \varphi \cdot  \nabla_v \mu= 0 $$  
which gives \begin{equation}\label{clVM-def-lim} \hat f  =   \frac{ \nabla_v \mu \cdot n}{n\cdot (v-\omega)}  \hat \varphi , \qquad  \hat \varphi = \frac{-1}{|n|^2} \sum_{j,\ell} n_j n_\ell \int v_j v_\ell \hat f (v) \; dv.\end{equation}
This yields the existence of a growing mode if and only if there is a pair $(n,\omega)$, with $\Im (n\cdot \omega) >0$, so that the following dispersion relation holds:  
\begin{equation}\label{Penrose2}
\frac{1}{|n|^2} \sum_{j,\ell} n_j n_\ell  \int v_j v_\ell   \frac{ \nabla_v \mu \cdot n}{n\cdot (v-\omega)}   \; dv = -1 .
\end{equation}
%For the sake of simplicity, we shall restrict to the spatial frequency in the form $n = (n,0,0)$ and so $\omega = (\omega, 0,0)$. The dispersion relation simply becomes 
%\begin{equation}\label{Penrose2}
%  \int_{\mathbb{R}^3} \frac{v_1^2 \mu_{v_1}}{v_1 - \omega} \; dv = -1,\end{equation}
%for some $\omega$ with $\Im \omega\not =0$. 
We shall call this property  the Penrose instability condition.

We summarize this statement in the following lemma:

\begin{lemma}\label{lem-unmode2} The linearized operator $\mathcal{L}$ possesses a growing mode in the form \eqref{def-grmode-nVP} if and only if the Penrose instability condition \eqref{Penrose2} holds for some complex number $\Im \omega \not =0$. In case of instability, the unstable spectrum is unbounded, containing all the points $\lambda = - i n \cdot \omega$, with $n \in \mathbb{Z}^3$ so that $ \Im( n\cdot \omega)>0$ and with corresponding eigenfunctions given by~\eqref{def-grmode-nVP}-\eqref{clVM-def-lim}.
%$$ e^{i n y_1} \hat f(v), \quad \hat f(v)=   \frac{ \mu_{v_1} }{v_1-\omega}.$$
%whose total density $\hat \rho = \int \hat f \; dv = 1$.  
\end{lemma}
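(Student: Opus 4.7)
The plan is purely computational: I would derive the dispersion relation by direct substitution, then observe a scaling invariance that immediately yields unboundedness of the unstable spectrum.

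First I would plug the ansatz \eqref{def-grmode-nVP} into the linearized Vlasov equation \eqref{nVP-lin}. The factor $e^{in\cdot(y - \omega t)}$ pulls out, leaving the algebraic relation
\[
i\, n\cdot(v - \omega)\,\hat f(v) \;=\; i\,(n\,\hat\varphi)\cdot \nabla_v \mu(v).
\]
Since we are looking for a mode with $\Im(n\cdot\omega)\neq 0$, the denominator $n\cdot(v-\omega)$ does not vanish for any real $v$; in fact $|n\cdot(v-\omega)|\geq |\Im(n\cdot\omega)|>0$, so we may solve for $\hat f$ and obtain the first identity in \eqref{clVM-def-lim}. The analyticity and fast decay of $\mu$ then guarantee that $\hat f\in L^2(\mathbb R^3)$, with all moments in $v$ well-defined.

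Next I would substitute the profile \eqref{def-grmode-nVP} into the elliptic equation in \eqref{nVP-lin}. Since $-\Delta_y$ acts as multiplication by $|n|^2$ on the Fourier mode $e^{in\cdot y}$ (and $n\neq 0$ so the zero-mean condition is automatic), we get
\[
|n|^2 \hat\varphi \;=\; -\sum_{j,\ell} n_j n_\ell \int v_j v_\ell\, \hat f(v)\,dv,
\]
which is the second identity in \eqref{clVM-def-lim}. Inserting the expression for $\hat f$ from the previous step and dividing by $\hat\varphi$ (which must be nonzero in order for $\hat f$ to be nonzero, since $\hat f$ is a multiple of $\hat\varphi$) produces precisely the Penrose relation \eqref{Penrose2}. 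Conversely, if \eqref{Penrose2} holds for some $\omega$ with $\Im(n\cdot\omega)\neq 0$, then picking any nonzero $\hat\varphi\in\mathbb C$ and defining $\hat f$ by \eqref{clVM-def-lim} produces a genuine growing mode, proving the equivalence. The eigenvalue is read off from the time dependence $e^{-i(n\cdot\omega)t}$ applied to $\mathcal L$: one has $\mathcal L\,[e^{in\cdot y}\hat f] = -i(n\cdot\omega)\,e^{in\cdot y}\hat f$, so $\lambda = -in\cdot\omega$ with $\Re\lambda = \Im(n\cdot\omega)>0$.

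For the unboundedness of the unstable spectrum, the key observation is the scaling invariance of the dispersion relation \eqref{Penrose2} under $n\mapsto k n$ for $k\in\mathbb N^*$. The prefactor $1/|n|^2$ scales by $1/k^2$, the product $n_j n_\ell$ by $k^2$, and the ratio $(\nabla_v\mu\cdot n)/(n\cdot(v-\omega))$ is invariant since both numerator and denominator scale by $k$. Therefore, if $(n_0,\omega_0)$ solves \eqref{Penrose2} then so does $(k n_0, \omega_0)$ for every $k\in\mathbb N^*$, giving eigenvalues $\lambda_k = -i(k n_0)\cdot\omega_0$ with $\Re\lambda_k = k\,\Im(n_0\cdot\omega_0)\to+\infty$. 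This is what gives the ``unbounded'' part of the conclusion.

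The only mildly subtle point—more a matter of careful bookkeeping than a real obstacle—is verifying the integrability of $v_j v_\ell\,\hat f(v)$ and of the integrand in \eqref{Penrose2}: this is immediate because $\nabla_v\mu$ decays fast (by the standing hypothesis $\|\mu\|_{\delta'_0}<+\infty$) and because $|n\cdot(v-\omega)|$ is bounded below by $|\Im(n\cdot\omega)|>0$, so no principal-value prescription is needed. No other step presents difficulty.
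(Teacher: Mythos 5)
Your proof is correct and takes essentially the same route as the paper: substitute the ansatz \eqref{def-grmode-nVP} into \eqref{nVP-lin} to obtain \eqref{clVM-def-lim} and the Penrose relation \eqref{Penrose2}, then use the degree-zero homogeneity of \eqref{Penrose2} in $n$ (so that $(kn_0,\omega_0)$ solves it whenever $(n_0,\omega_0)$ does) to get eigenvalues $\lambda_k=-ik\,n_0\cdot\omega_0$ with unbounded real part, which is exactly the paper's (largely implicit) argument. Your additional bookkeeping on integrability, on $\hat\varphi\neq 0$, and on the converse construction only makes explicit what the paper leaves to the reader.
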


\subsection{Sharp semigroup bounds}
From now on, we consider a smooth radial equilibrium $\mu$ such that $\| \mu \|_{\delta'_1} <+\infty$ for some $\delta'_1>0$ and which gives unstable spectrum for \eqref{nVP-lin}. Typical examples are analytic radial double-bump equilibria with fast decay at infinity. Let $L$ be the matrix operator defined as in \eqref{def-cL}. We shall derive sharp bounds on the corresponding semigroup $e^{Ls}$ in the analytic function space $X_{\delta, \delta'}$.

Let us introduce, for all $n\in \mathbb{S}^2$,
$$ \mathcal{L}_{\hat n} f =  -i \hat n \cdot \Big(  v f  - \nabla_v \mu \varphi (f)\Big)  , \qquad   \varphi(f) := -\sum_{j,\ell} \hat n_j \hat n_\ell \int v_j v_\ell f (v) \; dv $$
We set $\gamma_0$ to be defined by 
\begin{equation}\label{def-g0-k} \gamma_0 : = \sup_{\hat n \in \mathbb{S}^2, \, \exists k \in \mathbb{N}^*, \, \sqrt{k} {\hat n} \in \mathbb{Z}^3} \Big \{ \Re \lambda_{\hat n} ~:~ \lambda_{\hat n} \in \sigma (\mathcal{L}_{\hat n})\Big\}.\end{equation}
%in which $ \mathcal{L}_{\hat n} f :=  -i \hat n \cdot (  v f  - \nabla_v \mu \varphi (f) )$, with $\varphi(f) = -\sum_{j,\ell} \hat n_j \hat n_\ell \int v_j v_\ell f (v) \; dv $.

Let us first quickly show that $\gamma_0$ exists and is positive. %We first ensure that the above set contains at least a positive element.  
Since $|\varphi (f)| \le C_0\| \langle v\rangle^4 f\|_{L^2}$ and $\mu$ decays sufficiently fast in $v$, it follows that $ i \hat n \cdot \nabla_v \mu \varphi (f) $ is a compact perturbation of the multiplication operator $-i \hat n \cdot v$. As a consequence, the unstable spectrum of $\mathcal{L}_{\hat n}$ consists precisely of  possible eigenvalues $\lambda$, solving the equation  $(\lambda - \mathcal{L}_{\hat n}) f  = 0$. It follows directly that $\lambda = -i \hat n \cdot \omega$ is an eigenvalue of $\mathcal{L}_{\hat n}$ if and only if 
the function $$D(\omega ; \hat n): =  1+ \sum_{j,\ell} \hat n_j \hat n_\ell  \int v_j v_\ell   \frac{ \nabla_v \mu \cdot \hat n}{\hat n\cdot (v-\omega)}   \; dv $$
has a zero $\omega \in \mathbb{C}^3$, for some $\hat n  = \frac{n}{|n|}$. We observe that $\sup_{\hat n \in \mathbb{S}^2} D(\omega; \hat n)\to 1$ as $|\omega|\to \infty$, and thus possible eigenvalues must lie in a bounded domain in the complex domain. Since we assume the existence of  unstable spectrum for \eqref{nVP-lin}, 
the above set is not empty, and $\gamma_0$ is well-defined and positive.
%there must be a maximal growing mode of $\mathcal{L}_\mathrm{\hat n}$ which  largest real part. This proves the existence and  positivity of $\gamma_0$. 

%The above maximum exists, since the Rayleigh problem has no solution as $|c|\to \infty$ and $D(c)$ is analytic in $c$, with $\Im c>0$. Let $c_0$ be the solution of $D(c_0) =0$ so that $\gamma_0 = \Im c_0$, and let $\hat \varphi_0$ be the corresponding eigenfunction. Since $\hat \varphi_0$ is a linear combination of $\hat \varphi_1, \hat \varphi_2$ defined as above, the regularity of $\hat \varphi_0$ follows from that of the given shear flow $U(z)$. 
%
\begin{proposition}\label{prop-semigroup-cL} Let $\gamma_0$ be defined as in \eqref{def-g0-k}, $\delta>0, $ and $\delta' \in (0,\gamma_0)$. The semigroup $e^{Ls}$, associated to $L$, is well-defined in $X_{\delta, \delta'}$, for $s$ and $\delta'$ small enough.
%Let  $\gamma > \gamma_0$ and $s>0$ such that $\delta - \gamma s> 0$. 
%For any function $h$ in $X_{\delta, \delta'}$, define $e^{Ls}h$ as the solution of
%
 %The matrix operator $L$ generates a semigroup $e^{Ls}$ for functions in $X_{\delta, \delta'}$. 
%Furthermore, if we assume that 
%\begin{equation}\label{def-g0} \gamma_0 : = \max_{\hat n \in \mathbb{S}^3} \Big \{ \Re \lambda_{\hat n} ~:~ \lambda_{\hat n} \in \sigma (\mathcal{L}_{\hat n})\Big\} \end{equation}
%is positive, in which $ \mathcal{L}_{\hat n} f :=  -i \hat n \cdot (  v f  - \nabla_v \mu \varphi (f) )$, with $\varphi(f) = -\sum_{j,\ell} \hat n_j \hat n_\ell \int v_j v_\ell f (v) \; dv $, 
More precisely, for any $\gamma > \gamma_0$, there is a positive constant $C_\gamma$ so that 
$$ \| e^{Ls} h \|_{\delta - \gamma s, \delta'} \le C_\gamma  \| h\|_{\delta, \delta'},$$
for all $h \in X_{\delta, \delta'}$,  and for all $\delta' \leq \min( \delta'/2, \gamma_0)$ and $s$ so that $\delta - \gamma s> 0$.  
\end{proposition}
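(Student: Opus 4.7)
\textbf{Proof plan for Proposition~\ref{prop-semigroup-cL}.} The strategy mirrors the proof of Proposition~\ref{prop-semigroup-cL-e} in Section \ref{s-Euler}: exploit the lower-triangular matrix structure of $L$, reduce the diagonal scalar operator $\mathcal{L}$ to Fourier modes in $y$, obtain sharp resolvent bounds in $L^2_v$ via the Penrose condition, and finally upgrade to the analytic norm $\|\cdot\|_{\delta'}$ by a transport-type energy estimate.

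First, I would Fourier-decompose in $y$: for $h \in X_{\delta,\delta'}$ with Fourier coefficients $h_n(v)$, the scalar semigroup $e^{\mathcal{L} s}h$ has $n$-th Fourier coefficient $e^{\mathcal{L}_n s} h_n$, where
$$
\mathcal{L}_n f = -i n\cdot v f + i n\cdot \nabla_v \mu \,\varphi_n(f), \qquad \varphi_n(f) = -\frac{1}{|n|^2}\sum_{j,\ell} n_j n_\ell \int v_j v_\ell f(v)\,dv.
$$
By the scaling $s\mapsto |n|s$, $\mathcal{L}_n$ is unitarily equivalent to $|n|\mathcal{L}_{\hat n}$ with $\hat n = n/|n|$, and the set of admissible $\hat n$ is exactly the one appearing in the definition \eqref{def-g0-k} of $\gamma_0$. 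Using the fact that $\int v_j v_\ell \nabla_v\mu\cdot\hat n / (\hat n\cdot(v-\omega))\,dv$ depends continuously on $\hat n$ and on $\omega$ in $\{\Re(-i\hat n\cdot\omega)>\gamma_0\}$, and that $\varphi_n$ is a compact perturbation of the multiplication operator $-i n\cdot v$, I would derive the resolvent bound
$$
\|(\lambda - \mathcal{L}_{\hat n})^{-1} h\|_{L^2_v(\langle v\rangle^m)} \leq \frac{C_\gamma}{1+|\Im\lambda|}\|h\|_{L^2_v(\langle v\rangle^m)}, \qquad \Re\lambda \geq \gamma>\gamma_0,
$$
for some $C_\gamma>0$, uniform in $\hat n$. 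The contour integral representation $e^{\mathcal{L}_n s}h = \mathrm{PV}\,\frac{1}{2\pi i}\int_{\gamma|n|-i\infty}^{\gamma|n|+i\infty} e^{\lambda s}(\lambda-\mathcal{L}_n)^{-1}h\,d\lambda$ then yields the $L^2_v$ bound $\|e^{\mathcal{L}_n s}h\|_{L^2_v(\langle v\rangle^m)}\leq C_\gamma e^{\gamma|n|s}\|h\|_{L^2_v(\langle v\rangle^m)}$.

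Next I would establish the analogue of Lemma~\ref{lem-trans} for the $v$-transport part. For a solution $f$ of $(\partial_s + i n\cdot v)f = g$, differentiating in $v$ and computing the commutator $[\partial_v^\alpha, n\cdot v] = \sum_j \alpha_j n_j \partial_v^{\alpha-e_j}$ together with the weight $\langle v\rangle^m$, one obtains
$$
\frac{d}{ds}\|f\|_{\delta'} \leq \delta' |n|\,\|f\|_{\delta'} + \|g\|_{\delta'},
$$
and Gronwall gives $\|f(s)\|_{\delta'} \leq e^{\delta' |n| s}\|f(0)\|_{\delta'} + \int_0^s e^{\delta'|n|(s-\tau)}\|g(\tau)\|_{\delta'}\,d\tau$. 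Applying this to $f = e^{\mathcal{L}_n s}h$ with $g = i n\cdot \nabla_v\mu\,\varphi_n(f)$, and using $|\varphi_n(f)|\lesssim \|\langle v\rangle^4 f\|_{L^2_v}$ together with $\|\nabla_v\mu\|_{\delta'}\lesssim 1$ for $\delta'\leq \delta'_1$, the $L^2_v$ bound above and Gronwall produce
$$
\|e^{\mathcal{L}_n s}h\|_{\delta'} \leq C_\gamma\,e^{(\delta' + \gamma)|n|s}\|h\|_{\delta'},
$$
valid for $\delta'\leq \gamma_0$. Multiplying by $e^{(\delta-\gamma s)|n|}$ and summing over $n\in\mathbb{Z}^3$, provided that we absorb the $e^{\delta'|n|s}$ factor by taking $\delta' \leq \gamma - \gamma_0$ small enough (which is consistent with the hypothesis $\delta'\leq \min(\delta'/2,\gamma_0)$ after picking $\gamma$ slightly above $\gamma_0$), I obtain the claimed bound for $e^{\mathcal{L} s}$.

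Finally, for the full matrix operator $L$, the triangular structure allows me to treat each block by Duhamel. The component $F_1$ and each of the three components of $F_2$ solve $\partial_s W = \mathcal{L} W$, so the scalar bound applies directly. For $F_3$, the equation reads $\partial_s F_3 - \mathcal{T} F_3 = \mathcal{M} F_2$, which in Fourier is a transport equation $(\partial_s + i n\cdot v)F_{3,n} = (\mathcal{M} F_2)_n$. The transport lemma above, combined with the mapping property $\|\mathcal{M} G\|_{\delta'} \lesssim \|G\|_{\delta'}$ (using that $\varphi(G)$ only involves integrals in $v$, and $\|\nabla_v(\nabla_v\mu \cdot \varphi(G))\|_{\delta'}\lesssim |\varphi(G)|\,\|\mu\|_{\delta'}$), and the already-proven bound on $F_2$, give the claim for $F_3$ after summing in $n$. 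The main obstacle will be verifying the uniform resolvent bound across the sphere of directions $\hat n$ and checking that the weighted $L^2_v$-based analytic norm truly gives the loss rate $\gamma_0|n|$ and no worse; this is what forces the sharp constraint $\gamma>\gamma_0$ and the smallness of $\delta'$ relative to $\gamma_0$.
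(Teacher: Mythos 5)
Your overall route is the same as the paper's: Fourier decomposition in $y$, rescaling $s\mapsto |n|s$ to reduce to $\mathcal{L}_{\hat n}$, a weighted $L^2_v$ bound via the inverse Laplace transform and the Penrose dispersion relation, an upgrade to the analytic norm $\|\cdot\|_{\delta'}$ by commutator/energy estimates and Gronwall, and finally the triangular structure of $L$ with Duhamel for the third block. However, the pivotal middle step is wrong as stated: the resolvent bound $\|(\lambda-\mathcal{L}_{\hat n})^{-1}h\|_{L^2_v(\langle v\rangle^m)}\le \frac{C_\gamma}{1+|\Im\lambda|}\|h\|_{L^2_v(\langle v\rangle^m)}$ is false. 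From the resolvent identity $f=\frac{h}{\lambda+i\hat n\cdot v}-\frac{i\,\hat n\cdot\nabla_v\mu}{\lambda+i\hat n\cdot v}\,\varphi(f)$, the free-transport part has operator norm of order $1/\Re\lambda$ with \emph{no} decay in $\Im\lambda$: taking $h$ concentrated where $\hat n\cdot v\approx-\Im\lambda$ gives $\|h/(\lambda+i\hat n\cdot v)\|\approx\|h\|/\gamma$. Only the scalar quantity $\varphi(f)$ enjoys the $(1+|\Im\lambda|)^{-1}$ decay, because it is computed explicitly in terms of $h$ through the dispersion function $D(\lambda)$, which stays away from zero on $\Re\lambda=\gamma>\gamma_0$ by the definition of $\gamma_0$. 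As a consequence, your contour-integral step ("the representation then yields the $L^2_v$ bound") does not follow by termwise estimation: the paper must split the resolvent, compute the inverse Laplace transform of the free part \emph{exactly} as the isometric group $e^{-i\hat n\cdot v\,s}h$, and only estimate the remaining piece, which carries both the $\varphi(f)$ decay and the extra factor $(\lambda+i\hat n\cdot v)^{-1}$ and is therefore integrable along the contour. You need this split (or an equivalent device) for the $L^2$ semigroup bound; without it the argument has a genuine gap.

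Two secondary points. First, for uniformity in the direction $\hat n$ you invoke continuity over the sphere; the paper's cleaner device is radiality of $\mu$: a rotation conjugates $\mathcal{L}_{\hat n}$ to $\mathcal{L}_{\hat n_1}$ with $\hat n_1=(1,0,0)$, so a single direction suffices and no compactness/continuity argument is needed. Second, your analytic-norm step produces $e^{(\delta'+\gamma)|n|s}$ and then requires $\delta'\le\gamma-\gamma_0$; this is looser than necessary and weakens the statement (the admissible $\delta'$ would shrink as $\gamma\downarrow\gamma_0$). In the Gronwall/Duhamel step the factors $e^{\delta'|n|s}$ and $e^{\delta'|n|(s-\tau)}e^{\gamma|n|\tau}$ are already dominated by $e^{\gamma|n|s}$ precisely because $\delta'\le\gamma_0<\gamma$, so one obtains $C_\gamma e^{\gamma|n|s}$ with no additive loss, which is what the summation over $n$ against the weight $e^{(\delta-\gamma s)|n|}$ requires with $\delta'$ constrained only by $\gamma_0$ (and by the analyticity radius of $\mu$), as in the proposition; the same remark applies to the Duhamel bound for $F_3$. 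These two points are fixable bookkeeping, but the resolvent issue above is the step you must repair.
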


We start the proof of Proposition \ref{prop-semigroup-cL} with the semigroup $e^{\mathcal{L}s}$. 
Here, we recall that  
$$\mathcal{L}f = 
- v \cdot \nabla_y f + \nabla_y \varphi \cdot \nabla_v \mu , \qquad  -\Delta_y \varphi = \nabla_y \cdot\Big (\nabla_y \cdot \Big( \int v \otimes v f \, dv\Big)\Big).$$
In Fourier variables (with respect to $y$), we study for all $n\in \mathbb Z^3$
$$ \mathcal{L}_n f =  -i n \cdot \Big(  v f  - \nabla_v \mu \varphi (f)\Big)  , \qquad   \varphi(f) := \frac{-1}{|n|^2} \sum_{j,\ell} n_j n_\ell \int v_j v_\ell f (v) \; dv $$
and solve the ODEs 
$$ (\partial_s - \mathcal{L}_n) f =0, \quad f(0,v) = f_0(v).$$ 
%
%We use the following analytic norm in $v$, with the analyticity radius $\delta'>0$: 
%$$ \| f\|_{\delta'}: = \sum_{|\alpha|\ge 0} \| \langle v\rangle^m  \partial_v^\alpha f\|_{L^2(\mathbb{R}^3)} \frac{|\delta'|^{|\alpha|}}{|\alpha| !} .$$
% Let $X_{\delta'}$ be the analytic space with the above finite norm: $\| \cdot \|_{\delta'}$. By definition, one can check that 
%$$ \| \nabla_v f \|_{\delta''} \le \frac{C (\delta')}{\delta' - \delta''} \| f\|_{\delta'}, \qquad \forall 0< \delta'' < \delta'.$$

\begin{lemma}\label{lem-semigroupLn} Let $\gamma_0$ be defined as in \eqref{def-g0-k}. For each $n\in \mathbb{Z}^3$, the operator $\mathcal{L}_n$ generates a continuous semigroup $e^{\mathcal{L}_n t}$  from $X_{\delta'}$ to itself. In addition, for any $\gamma>\gamma_0$, there is a positive constant $C_\gamma$ so that 
$$ \| e^{\mathcal{L}_n s} h \|_{\delta'} \le C_\gamma e^{\gamma|n|s} \| h\|_{\delta'},\qquad \forall~ s\ge 0,$$
for all $h \in X_{\delta'}$, for small $\delta'>0$. 
\end{lemma}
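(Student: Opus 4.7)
The plan is to mirror the argument used for $e^{\mathcal{L}_n s}$ in the hydrostatic case (Lemma~\ref{lem-semigroupLn-e}): obtain a sharp $L^2_m$-bound via inverse Laplace transform on the resolvent, then upgrade to the analytic norm through commutator/energy estimates. Since $\mathcal{L}_n = |n|\,\mathcal{L}_{\hat n}$ with $\hat n := n/|n|$, the rescaling $\tau = |n|s$ reduces the problem to establishing $\|e^{\mathcal{L}_{\hat n}\tau}h\|_{L^2_m}\le C_\gamma e^{\gamma\tau}\|h\|_{L^2_m}$, uniformly over the directions $\hat n$ arising from integer vectors in $\mathbb{Z}^3$ -- precisely the class defining $\gamma_0$ in~\eqref{def-g0-k}.

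For the $L^2_m$ bound, I would use the representation
$$e^{\mathcal{L}_{\hat n}\tau}h \;=\; \mathrm{PV}\,\frac{1}{2\pi i}\int_{\gamma-i\infty}^{\gamma+i\infty} e^{\lambda\tau}(\lambda-\mathcal{L}_{\hat n})^{-1}h\;d\lambda, \qquad \gamma>\gamma_0.$$
Solving $(\lambda - \mathcal{L}_{\hat n})f = h$ gives
$$f \;=\; \frac{h}{\lambda + i\hat n\cdot v} \;+\; \frac{i(\hat n\cdot \nabla_v\mu)}{\lambda + i\hat n\cdot v}\,\varphi(f),$$
and taking the corresponding moment of $f$ closes a scalar equation $D(\lambda;\hat n)\,\varphi(f) = G[h](\lambda,\hat n)$, where $D$ is (a relabeling of) the Penrose function of Lemma~\ref{lem-unmode2}. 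By the very definition of $\gamma_0$, $D$ has no zeros on $\{\Re\lambda>\gamma_0\}$; since moreover $D\to 1$ as $|\lambda|\to\infty$ uniformly in $\hat n$, one obtains $|D(\lambda;\hat n)|\gtrsim 1$ on any half-plane $\{\Re\lambda\ge \gamma\}$ with $\gamma>\gamma_0$. The weight $\langle v\rangle^m$ with $m\ge 4$ is used here to control the quadratic $v$-moments: $|\varphi(f)|\lesssim \|f\|_{L^2_m}$ and the $v_jv_\ell$-moments of $h/(\lambda + i\hat n\cdot v)$ are bounded by $\|h\|_{L^2_m}/(1+|\Re\lambda|)$. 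Splitting the integral, the free-streaming piece $\frac{1}{2\pi i}\int e^{\lambda\tau}h/(\lambda+i\hat n\cdot v)\,d\lambda = e^{-i(\hat n\cdot v)\tau}h$ is an isometry of $L^2_m$, while the remainder inherits an $e^{\gamma\tau}$ factor by shifting the contour to $\Re\lambda=\gamma$.

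To pass from $L^2_m$ to the analytic norm, I would then treat $\omega := e^{\mathcal{L}_n s}h$ as a solution of the transport-type equation $(\partial_s + in\cdot v)\omega = i(n\cdot \nabla_v\mu)\varphi(\omega)$ and apply $\partial_v^\alpha$. Multiplication by $in\cdot v$ is skew-adjoint in $L^2_m$, so the only cost in a weighted $L^2$ energy estimate on $\partial_v^\alpha\omega$ comes from the commutator $[\partial_v^\alpha, in\cdot v] = i\sum_j n_j\alpha_j\partial_v^{\alpha-e_j}$; summing over $\alpha$ with weights $|\delta'|^{|\alpha|}/|\alpha|!$ and using the combinatorial bound $\sum_j(\beta_j+1)\le 3(|\beta|+1)$ produces, exactly as in~\eqref{a-transport},
$$\frac{d}{ds}\|\omega\|_{\delta'} \;\le\; 3\delta' |n|\,\|\omega\|_{\delta'} \;+\; |n|\,\|\nabla_v\mu\|_{\delta'}\,|\varphi(\omega)|,$$
with $|\varphi(\omega)|\lesssim \|\omega\|_{L^2_m}$. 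Choosing $\delta'$ small enough so that $3\delta'\le \gamma_0<\gamma$ and combining Gr\"onwall with the $L^2_m$ bound above yields the desired estimate $\|\omega(s)\|_{\delta'}\le C_\gamma e^{\gamma|n|s}\|h\|_{\delta'}$.

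The main obstacle is the resolvent step: one needs a lower bound on $|D(\lambda;\hat n)|$ that is uniform over admissible directions $\hat n$ and provides enough decay in $|\Im\lambda|$ to legitimately shift the Laplace contour from the right half-plane down to $\Re\lambda=\gamma\in(\gamma_0,+\infty)$. The weighted space $L^2_m$ with $m\ge 4$ is what makes this workable, by controlling the quadratic moments of $v$ appearing both in $D$ and in the data-dependent right hand side.
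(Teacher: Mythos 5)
Your strategy is essentially the one used in the paper: rescale time by $|n|$, represent $e^{\mathcal{L}_{\hat n}s}$ by the inverse Laplace transform, solve the resolvent equation for the moment $\varphi(f)$ through the Penrose function $D(\lambda;\hat n)$, use $m\ge 4$ to control the quadratic $v$-moments and split off the free-streaming part $e^{-i(\hat n\cdot v)s}h$, then upgrade from the weighted $L^2$ bound to the $\|\cdot\|_{\delta'}$ norm by $\partial_v^\alpha$ energy estimates, the commutator $[\partial_v^\alpha,\hat n\cdot v]$, and Gr\"onwall with $\delta'$ small compared to $\gamma$. Two remarks. First, the point you single out as ``the main obstacle''---a lower bound on $|D(\lambda;\hat n)|$ on $\{\Re\lambda\ge\gamma\}$ \emph{uniform over the admissible directions} $\hat n$---is not established by the argument you give: for each fixed $\hat n$ the absence of zeros in $\{\Re\lambda>\gamma_0\}$ together with $D\to 1$ at infinity yields a direction-dependent constant, and passing to a uniform one over the infinite (dense) family of rational directions requires an extra compactness/Hurwitz argument. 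The paper closes this gap in one line by exploiting that $\mu$ is radial: conjugating by the rotation $R$ with $R\hat n=(1,0,0)$ shows $\mathcal{L}_{\hat n}$ is unitarily equivalent to the single operator $\mathcal{L}_1=-iv_1+\mu_{v_1}\varphi(\cdot)$, so $D$ and the spectrum do not depend on $\hat n$ and uniformity is automatic; you should either invoke this reduction or supply the compactness argument. (The same observation, via the compact-perturbation of multiplication by $-iv_1$, is also what gives the generation of the semigroup, which your write-up does not address.) Second, a small slip: on the contour $\Re\lambda=\gamma$ the moment of $h/(\lambda+i\hat n\cdot v)$ decays like $(1+|\Im\lambda|)^{-1}$, not $(1+|\Re\lambda|)^{-1}$; it is this decay in $|\Im\lambda|$ (as you correctly say later) that makes the principal-value integral for the non-free-streaming part convergent and produces the factor $e^{\gamma s}$.
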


\begin{proof} 
%Let $n \in \mathbb{Z}^3$ and $m\ge 4$. 
We introduce the time scaling $s \mapsto |n| s$. It suffices to study the semigroup $e^{\mathcal{L}_{\hat n} s}$ for the scaled operator
$$ \mathcal{L}_{\hat n} f =  -i \hat n \cdot \Big(  v f  - \nabla_v \mu \varphi (f)\Big)  , \qquad   \varphi(f) := -\sum_{j,\ell} \hat n_j \hat n_\ell \int v_j v_\ell f (v) \; dv $$
for $\hat n = \frac{n}{|n|}$. Let $R$ be the rotation matrix so that $R \hat n = \hat n_1: = (1,0,0)$.  Since $\mu \equiv \mu(|v|^2)$, the operator $\mathcal{L}_{\hat n}$ is invariant under the change of variable: $\hat n \mapsto R\hat n$ and $v \mapsto R v$. Hence,  it suffices to derive estimates for $\mathcal{L}_1: = \mathcal{L}_{\hat n_1} = -iv_1 + \mu_{v_1} \varphi(\cdot)$. Since $|\varphi (f)| \le C_0\| \langle v\rangle^4 f\|_{L^2}$ and $\mu_{v_1}$ decays sufficiently fast in $v$, $ \mu_{v_1} \varphi(f)$ is a compact perturbation of the multiplication operator by $-iv_1$. Hence $\mathcal{L}_1$ generates a continuous semigroup $e^{\mathcal{L}_1s}$ with respect to the weighted norm $\| \langle v\rangle^m \cdot \|_{L^2(\mathbb{R}^3)}$.

 In addition, following the standard semigroup theory (\cite{Pazy} or \cite[Appendix A]{Zum}), we can write  
\begin{equation}
\label{eLn}
e^{\mathcal{L}_1s} h= P.V. \frac{1}{2\pi i} \int_{\gamma - i\infty}^{\gamma + i \infty} e^{\lambda s} (\lambda - \mathcal{L}_1)^{-1} h \; d\lambda \end{equation}
for any $\gamma > \gamma_0$. With $\mathcal{L}_1 = -iv_1 + \mu_{v_1} \varphi(\cdot)$, the resolvent equation $(\lambda - \mathcal{L}_1)f = h $ gives 
\begin{equation}\label{res-Lnh} f + \frac{ i \mu_{v_1}}{\lambda  + i v_1} \varphi(f) = \frac{h}{\lambda  + iv_1 } .\end{equation}
By a view of $\varphi(f)$, we can first solve $\varphi(f)$ in terms of the initial data $h$: 
$$  \varphi(f) = - \frac{1}{D(\lambda)}\int \frac{v_1^2 h}{\lambda  + i v_1} \; dv ,\qquad D(\lambda): =  1- i\int  \frac{ v_1^2 \mu_{v_1}}{\lambda  + i v_1}\; dv  .$$
We note that $D(\lambda) = 0$ if and only if $\lambda $ is an eigenvalue of $\mathcal{L}_1$. It follows that  
\begin{equation}\label{bd-phif}|\varphi(f) |\le \frac{C_\gamma}{1+|\Im \lambda| } \| \langle v\rangle^4 h \|_{L^2(\mathbb{R}^3)} \end{equation}
uniformly for all $\lambda = \gamma+ i\mathbb{R}$, with any fixed number $\gamma>\gamma_0$. Thus, from \eqref{eLn} and \eqref{res-Lnh}, we compute 
$$\begin{aligned}
e^{\mathcal{L}_1s} h
&= P.V. \frac{1}{2\pi i} \int_{\gamma - i\infty}^{\gamma + i \infty} e^{\lambda s} 
\Big[ -\frac{ i \mu_{v_1}}{\lambda  + i v_1} \varphi(f) + \frac{h}{\lambda  + iv_1 } \Big]  \; d\lambda
\end{aligned}$$
in which the second integral is equal to $ e^{ - i v_1 s} h$, while the first integral can be estimated directly using the estimate \eqref{bd-phif} on $\varphi(f)$. This yields at once 
\begin{equation}\label{L2-semigroup}\| \langle v\rangle^m e^{\mathcal{L}_1 s} h\|_{L^2} \le C_\gamma e^{\gamma s}   \| \langle v \rangle^m h \|_{L^2} ,\end{equation}
for any $\gamma>\gamma_0.$ 
%where we recall that $\gamma_0$ is defined as in \eqref{def-g0-k}. 
 
Next, for higher derivatives of $f = e^{\mathcal{L}_1 s}h$, we note that $ \partial_v^\alpha f$ solves 
$$ \partial_s  \partial_v^\alpha f + i v_1   \partial_v^\alpha f + i  \partial_v^\alpha \mu_{v_1} \varphi(f)  + i  [ \partial_v^\alpha, v_1] f =0.$$
Standard $L^2$ estimates for $ \partial_v^\alpha f$ yield, for all $\alpha = (\alpha_1, \alpha_2, \alpha_3)$ and $\alpha' =(\alpha_1-1, \alpha_2, \alpha_3)$,
\begin{equation}\label{Dv-k}
 \begin{aligned}
 \frac 12\frac{d}{ds} \| \langle v \rangle^m  \partial_v^\alpha f (s)\|_{L^2}^2 
 & \le   \Big [\|  \partial_v^\alpha\mu_{v_1} \varphi(f)\|_{L^2}  + \|   [ \partial_v^\alpha, v_1]  f\|_{L^2} \Big] \| \langle v \rangle^m  \partial_v^\alpha f(s)\|_{L^2}
 \\
 & \le   \Big [C_0\| \langle v\rangle ^m f \|_{L^2} \|   \partial_v^\alpha \mu_{v_1} \|_{L^2} + |\alpha_1| \|    \partial_v^{\alpha'} f\|_{L^2} \Big] \| \langle v \rangle^m   \partial_v^\alpha f(s)\|_{L^2}
,
 \end{aligned}\end{equation}
 upon using the fact that the term $iv_1 f$ does not yield any contribution when taking the real part of the $L^2$ energy identities. By a view of the definition of the analytic norm, the above estimates give 
$$ \begin{aligned}
\frac{d}{ds} \| f(s)\|_{\delta'} 
&= \sum_{|\alpha|\ge 0} \frac{d}{ds} \| \langle v\rangle^m   \partial_v^\alpha f(s)\|_{L^2(\mathbb{R}^3)} \frac{|\delta'|^{|\alpha|}}{|\alpha| !}
\\
&\le  C_0  \| \langle v \rangle^m f \|_{L^2} +  \sum_{|\alpha|\ge 1}  \frac{|\delta'|^{|\alpha|}}{{|\alpha|} !}  \Big [ C_0 \| \langle v\rangle ^m f \|_{L^2} \|   \partial_v^\alpha \mu_{v_1}\|_{L^2} + |\alpha_1| \|   \partial_v^{\alpha'} f\|_{L^2} \Big]
\\
&\le  C_0  (1+ \| \nabla_v \mu\|_{\delta'}) \| \langle v \rangle^m f \|_{L^2} +  \delta'  \sum_{|\alpha'|\ge 1}  \frac{|\delta'|^{|\alpha'|}}{|\alpha'| !}  \|   \partial_v^{\alpha'}f\|_{L^2} 
\\
&\le C_0  (1+ \| \nabla_v \mu\|_{\delta'}) \| \langle v \rangle^m f \|_{L^2} + \delta' \|  f \|_{\delta'} ,
\end{aligned}$$
which entails
\begin{equation}\label{Af-bound0} \| f(s)\|_{\delta'} \le  e^{\delta' s} \| f(0)\|_{\delta'} + C_0  (1+ \| \nabla_v \mu\|_{\delta'}) \int_0^s  e^{\delta' (s-\tau)} \| \langle v \rangle^m f(\tau) \|_{L^2} \; d\tau ,\end{equation}
for any $\delta'>0$. Now, thanks to the $L^2$ bound \eqref{L2-semigroup} on the semigroup and the assumption that $\delta'\le \gamma_0$, we get 
\begin{equation}\label{semigroup-bound0} \| f(s)\|_{\delta'} \le  \tilde{C}_\gamma  (1+ \| \nabla_v \mu\|_{\delta'}) e^{\gamma s} \| f(0)\|_{\delta'} 
.\end{equation} 
%Using the rescaling in time $s \mapsto |n| s$, 
The claimed bound in the lemma is therefore proved. 

\end{proof}

We can finally end the proof of Proposition \ref{prop-semigroup-cL}.

\begin{proof}[Proof of Proposition \ref{prop-semigroup-cL}] We let $ f = e^{\mathcal{L}s} h$. Lemma \ref{lem-semigroupLn} yields 
$$ \| f_n  \|_{\delta'} \le C_\gamma e^{\gamma  |n|s} \| h_n\|_{\delta'},$$
for all $\gamma >\gamma_0$, and for small enough  $\delta' >0$. Hence, by definition of the norms, for any $s$ so that $\delta - \gamma s >0$, we get 
$$ \| f(s)\|_{\delta - \gamma s, \delta'} \le  C_\gamma \sum_{n\in \mathbb{Z}^3} e^{\gamma |n|s} \| h_n\|_{\delta'}  e^{(\delta - \gamma s) |n| } \le C_\gamma \| h\|_{\delta , \delta'}.$$
This proves the claimed bound for $e^{\mathcal{L}s}$. As for $F = e^{L s}H$, by the structure of the matrix operator $L$ (see \eqref{def-cL}), it is clear that the above estimate holds for $F_1, F_2$. As for $F_3$, we write 
$$( \partial_s + v \cdot \nabla_y )F_3  +  F_2  -  \nabla_v ( \varphi(F_2)\cdot \nabla_v \mu)  = 0.$$ 
Similarly to the estimate obtained in \eqref{Af-bound0} via energy estimates, for $\delta' \le \gamma_0$, in the Fourier variable $n \in \mathbb Z^3$, we immediately get 
$$
\begin{aligned}
 \| F_{3,n}(s)\|_{\delta'}  
 &\le  e^{\delta' |n|s} \| H_{3,n}(0)\|_{\delta'} + C_0  (1+ \| \nabla_v \mu\|_{\delta'}) \int_0^s  e^{\delta' |n|(s-\tau)} \| F_{2,n}(\tau)\|_{\delta'}  \; d\tau
 \\
 &\le  e^{\delta' |n|s} \| H_{3,n}(0)\|_{\delta'} + C_\gamma  (1+ \| \nabla_v \mu\|_{\delta'}) \int_0^s  e^{\delta' |n|(s-\tau)} e^{\gamma |n|\tau} \| H_{2,n}\|_{\delta'}  \; d\tau
 \\
 &\le  e^{\delta' |n|s} \| H_{3,n}(0)\|_{\delta'} + C_\gamma  (1+ \| \nabla_v \mu\|_{\delta'})  e^{\gamma |n|s} \| H_{2,n}\|_{\delta'} .
 \end{aligned}
$$
Hence, as above, summing  the norms for all $n \in \mathbb{Z}^3$, we obtain the claimed bound for $F_3$, and hence for $e^{Ls}$. This completes the proof of the proposition.
\end{proof}

\subsection{Conclusion}
We are now ready to check the assumptions {\bf (H.1)}--{\bf (H.5)} made in the abstract framework. 
% First, we recall the analyticity norm: 
%\begin{equation*}
% \| f\|_{\delta, \delta'}: =\sum_{n\in \mathbb{Z}^3} \sum_{|\alpha|\ge 0} \| \langle v\rangle^m   \partial_v^\alpha f_n \|_{L^2(\mathbb{R}^3\times \mathbb{R}^3)} \frac{|\delta'|^{|\alpha|} }{|\alpha| !} e^{\delta |n| } , \end{equation*}
%for any $\delta, \delta' >0$, in which $f_n = \langle f, e^{in \cdot y} \rangle_{L^2(\mathbb{T}^3)}$. 
%The assumption {\bf (H.1)} follows by the norm definition. 

For {\bf (H.1)}, we note that if $g$ is an eigenfunction for $\mathcal{L}$ with an eigenvalue $\lambda$, that is $\mathcal{L} g = \lambda g$, then 
$G= \begin{pmatrix} g \\ \nabla_y g \\ \nabla_v g \end{pmatrix} $ is an eigenfunction for $L$ with the same eigenvalue $\lambda$. Thus, by construction of the growing mode of $\mathcal{L}$ in Lemma \ref{lem-unmode2}, the very definition of $\gamma_0$ in~\eqref{def-g0-k}  and Proposition \ref{prop-semigroup-cL}, {\bf (H.2)} holds. Assumption {\bf (H.3)} follows directly from the definition of $L$, whereas {\bf (H.4)} and {\bf (H.5)} are clear, thanks to the structure of the quadratic nonlinearity $Q(F,F)$ and the fact that there are no $R_1, R_2$ generated from the system \eqref{eqs-F}.

%TODO: uniqueness argument to show that solving this bigger system really gives a solution to the original one.

\bigskip

\noindent {\bf Acknowledgements.} We are grateful to Pennsylvania State University and \'Ecole polytechnique for hospitality during the preparation of this work. TN was partly supported by the NSF under grant DMS-1405728 and by a two-month visiting position at \'Ecole polytechnique during Spring 2015.

\bibliographystyle{plain}
\bibliography{eMHD}

\end{document}